\newcommand{\pP}{\mathbb{P}}
\newcommand{\R}{\mathbb{R}}
\newcommand{\D}{\text{ d}}
\theoremstyle{thmstyleone}%
\newtheorem{theorem}{Theorem}[section]
\newtheorem{lemma}[theorem]{Lemma}
\theoremstyle{thmstyletwo}%
\newtheorem{remark}[theorem]{Remark}%
\theoremstyle{thmstylethree}%
\newcounter{assumption}
\newtheorem{assumption}[theorem]{Assumption}
\newcounter{subassumption}[assumption]
\renewcommand{\thesubassumption}{(\textit{\roman{subassumption}})}
\renewcommand{\p@subassumption}{\theassumption}
\newcommand{\subasu}{
  \refstepcounter{subassumption}%
  \thesubassumption~\ignorespaces}
\begin{document}

\title[A Cournot-Nash Model for a Coupled Hydrogen and Electricity Market]{A Cournot-Nash Model for a Coupled Hydrogen and Electricity Market}

\author[1]{\fnm{Pavel} \sur{Dvurechensky}}\email{pavel.dvurechensky@wias-berlin.de}

\author*[2]{\fnm{Caroline} \sur{Geiersbach}}\email{caroline.geiersbach@uni-hamburg.de}

\author[1,3]{\fnm{Michael} \sur{Hinterm\"uller}}\email{michael.hintermueller@wias-berlin.de}\email{hint@math.hu-berlin.de}

\author[3]{\fnm{Aswin} \sur{Kannan}}\email{aswin.kannan@hu-berlin.de}

\author[4]{\fnm{Stefan} \sur{Kater}}\email{stefan.kater@mathematik.uni-freiburg.de}

\author[5]{\fnm{Gregor} \sur{Z\"ottl}}\email{gregor.zoettl@fau.de}

\affil*[1]{\orgname{Weierstrass Institute}, \orgaddress{\street{Mohrenstrasse 39}, \city{Berlin}, \postcode{10117}, \country{Germany}}}

\affil[3]{\orgname{Universität  Hamburg}, \orgaddress{\street{Bundesstraße 55}, \city{Hamburg}, \postcode{20146}, \country{Germany}}}

\affil[3]{\orgname{Humboldt-Universit\"at zu Berlin}, \orgaddress{\street{Unter den Linden 6}, \city{Berlin}, \postcode{10099},  \country{Germany}}}

\affil[4]{\orgname{Albert-Ludwigs-Universität Freiburg i. Br.}, \orgaddress{\street{ 	Hermann-Herder-Str. 10}, \city{Freiburg}, \postcode{79104}, \country{Germany}}}

\affil[5]{\orgname{Friedrich-Alexander-Universität Erlangen-Nürnberg}, \orgaddress{\street{Schloßplatz 4}, \city{Nürnberg}, \postcode{90020}, \country{Germany}}}


\abstract{We present a novel model of a coupled hydrogen and electricity market on the intraday time scale, where hydrogen gas is used as a storage device for the electric grid. Electricity is produced by renewable energy sources or by extracting hydrogen from a pipeline that is shared by non-cooperative agents. The resulting model is a generalized Nash equilibrium problem. Under certain mild assumptions, we prove that an equilibrium exists. Perspectives for future work are presented.}

\keywords{Coupled energy market, PDE-constrained generalized Nash equilibrium problem}



\maketitle

\section{Introduction}

One of the central challenges in variable renewable energy (VRE) sources like solar and wind energy on a large scale is in its variability due to fluctuations in weather and seasons. To balance energy supply and demand in real time, a storage mechanism is needed. One strategy in transitioning from fossil fuels is power-to-X, where power from renewable energy sources is transformed into other fuels that can be later used to produce electricity. Hydrogen (H$_2$) gas is one such fuel that is being considered as part of the energy transition. H$_2$ can be produced by electrolysis using VRE and can be stored in pipelines and caverns much as natural gas is stored today. During periods of low VRE production, this gas can be used to produce electricity. Hydrogen gas produced by VRE is considered ``green'' since hydrogen can be produced without releasing CO$_2$.

In this contribution, we present a novel mathematical model for a multi-modal energy system whereby a hydrogen network is coupled with an electricity grid. Non-cooperative agents can produce or sell different energy sources in a strategic way in order to maximize profit. We account for a shorter time horizon and uncertainty coming from fluctuations in demand. Our model aims to fill a gap in the game-theoretic literature, since so far, research has investigated models for electricity and gas markets separately but not yet as a coupled system. 
Electricity market models are based on the instantaneous transport of electricity; conventionally, they are oriented towards short-term planning over discrete time intervals. Gas networks, on the other hand, operate much more slowly and the physics of gas transport is governed by partial differential equations (PDEs) defined over continuous time. When the markets are coupled, it is natural to use a continuous, multiscale time model. Some seminal works have considered continuous time electricity market models and differential variational inequalities~\cite{terry16dvi,reeto08}. In the interest of coupling and physical practicalities, we adhere to this framework. We mention that~\cite{fokken21} is one recent work that studies a setting with both electrical and gas networks. However, the focus of this work is on solving a supply-demand problem, which includes constraints from the electrical network, constraints from the gas network in the form of partial differential equations, and coupling constraints from conversion. The setting does not focus on multiple agents or a game-theoretic problem. 
To our knowledge, there are no other works that focus on such joint markets in a game-theoretic setting.  


Our work contributes to several strands of the literature. In the context of energy  markets, we find several contributions that consider strategic interactions of firms in the
presence of shared and static, i.e., stationary, network constraints; see, e.g.,
\cite{SchiroPang:2013, HolmnbergandPhilpott:2018}.
Other contributions consider strategic market interaction in a dynamic context---but in the absence of
shared network constraints. The paper \cite{JunandVives:2004} considers adjustment costs when changing their output. Another work \cite{LedvinaandSircar:2011} studies strategic firms exploiting a private resource stock, and in \cite{LAMBERTINIPalestrini:2014}, the  authors consider the accumulation of private productive capacities over time.

Several works on dynamic oligopoly models do consider
shared constraints. In one strand of literature, the shared constraint can be
represented by a single state variable governed by an ordinary
differential equation. In \cite{XinandSun:2018,COLOMBOLabrecciosa:2019}, 
 differential games where firms jointly exploit a common renewable
production asset are studied. In contrast to those contributions, for a proper modeling of the physics underlying 
gas transport in our setup, we have to consider a spatially distributed state variable governed by a system of PDEs. Generalized Nash equilibrium problems (GNEPs) involving shared constraints in the form of a PDE include \cite{hintermueller2013pde,
Hintermueller2015,
DrevesGwinner:2016,
Gugat2018string,
kanzow2019multiplier}. None of these recent contributions, however, involve a model with a coupled electricity/gas market. A central difficulty, from a modeling and theoretical perspective, is the conversion between the two commodities and their different time scales. To make the two systems compatible, we adopt a continuous-time model for the electricity market. Moreover, the analysis of the equilibrium problem is delicate, requiring an understanding of the underlying PDE describing gas transport.


The proposed model is detailed in Section~\ref{sec:model}. 
In Section~\ref{sec:existence}, we prove the main theoretical result, namely, that the model is well-posed in the sense that an equilibrium exists (under certain mild assumptions). This work will be used as the foundation of future studies, which will be further described in Section~\ref{sec:conclusion}. 

\section{Equilibrium model}
\label{sec:model}
      

    

In this section, we describe our mathematical model of an energy market connected to physics on a coupled hydrogen and electricity network. Throughout, the topology of the latter is considered fixed over a given time horizon $T>0$. More specifically, our model comprises $N$ non-cooperative agents (firms) that seek to maximize their respective profit by making strategic decisions at the nodes of the energy network in the time period $[0,T]$. The general situation we want to analyze is as follows. Firms serve customers  who want to consume both electricity and hydrogen (both given in the units of megawatt hours, or MWh) at different locations. Firms own renewable electricity-generation facilities at different locations. Electricity can be either sold to satisfy demand of customers or it can be transformed into hydrogen. Hydrogen can be sold to customers or it can be transformed back into electricity. 

Our mathematical description of the model will proceed as follows: first, we define the network topology in Section~\ref{sec:network}. We then detail the possible decisions available to an agent and the agent's objective function in Section~\ref{sec:agents objective}. Operational constraints on the electricity and gas network are presented in Section~\ref{sec:cons-elec}. In Section~\ref{sec:summary-model}, we summarize the full model.

\subsection{Network topology}
\label{sec:network}

The electricity grid and the gas network are given by the finite, directed, connected, and acyclic graphs $\mathcal{G}^E = (\mathcal{V}^E, \mathcal{E}^E)$ and $\mathcal{G}^H = (\mathcal{V}^H, \mathcal{E}^H)$, respectively. These are subgraphs of the directed graph $\mathcal{G} = (\mathcal{V}, \mathcal{E}) = (\mathcal{V}^E \cup \mathcal{V}^H, \mathcal{E}^E \cup \mathcal{E}^H)$ representing the entire physical infrastructure, where $\mathcal{V}$ is the set of nodes/vertices and $\mathcal{E}$ denotes the set of edges. The nodes in $\mathcal{V}^E \cap \mathcal{V}^H$ are partitioned into the disjoint sets $\mathcal{V}^{\textup{GtP}}$ and $\mathcal{V}^{\textup{PtG}}$ where either electricity is converted to gas (PtG) or gas is used to generate electricity (GtP)\footnote{Reversible PtG stations, where systems also allow for GtP as discussed in \cite{Glenk2022}, can be accommodated by splitting the node at one location into two nodes and adding edges.}.
We also distinguish dedicated sale nodes $\mathcal{V}^s$, where one of the two commodities, electricity or hydrogen, is sold. GtP nodes, 
together with nodes where renewable energy is used to generate electricity, are combined in $\mathcal{V}^g$. By convention,  
only one plant/station is located at each node. 
%
The above network components are summarized in Table~\ref{table:notation_network}. A graphic depicting the coupled network is shown in Figure~\ref{fig:networks} with the corresponding graph shown in Figure~\ref{fig:networks-abstract}.

\begin{table}[]
    \centering
    \begin{tabular}{|l|l|}
        \hline
        Set & Description\\
        \hline
    $\mathcal{V}^E$  &  Electricity nodes\\
        $\mathcal{V}^H$    &  Hydrogen nodes\\
    $\mathcal{V}^{\textup{PtG}}$& Power-to-gas stations\\
        $\mathcal{V}^{\textup{GtP}}$& Gas-to-power stations\\
    $\mathcal{V}^s$ & Sale nodes\\
    $\mathcal{V}^g$ & Generation nodes\\
    $\mathcal{V}^H_{\partial}$ & Boundary hydrogen nodes $((\mathcal{V}^s\cap \mathcal{V}^H)\cup \mathcal{V}^{\textup{GtP}} \cup \mathcal{V}^{\textup{PtG}}) $  \\
   $\mathcal{V}^H_0$ & Interior hydrogen nodes $(\mathcal{V}^H \setminus \mathcal{V}^H_{\partial})$\\
        \hline
    \end{tabular}
    \caption{Description of nodal sets used in the network graph.}
    \label{table:notation_network}
\end{table}
\begin{figure}
\centering\includegraphics[height=8cm]{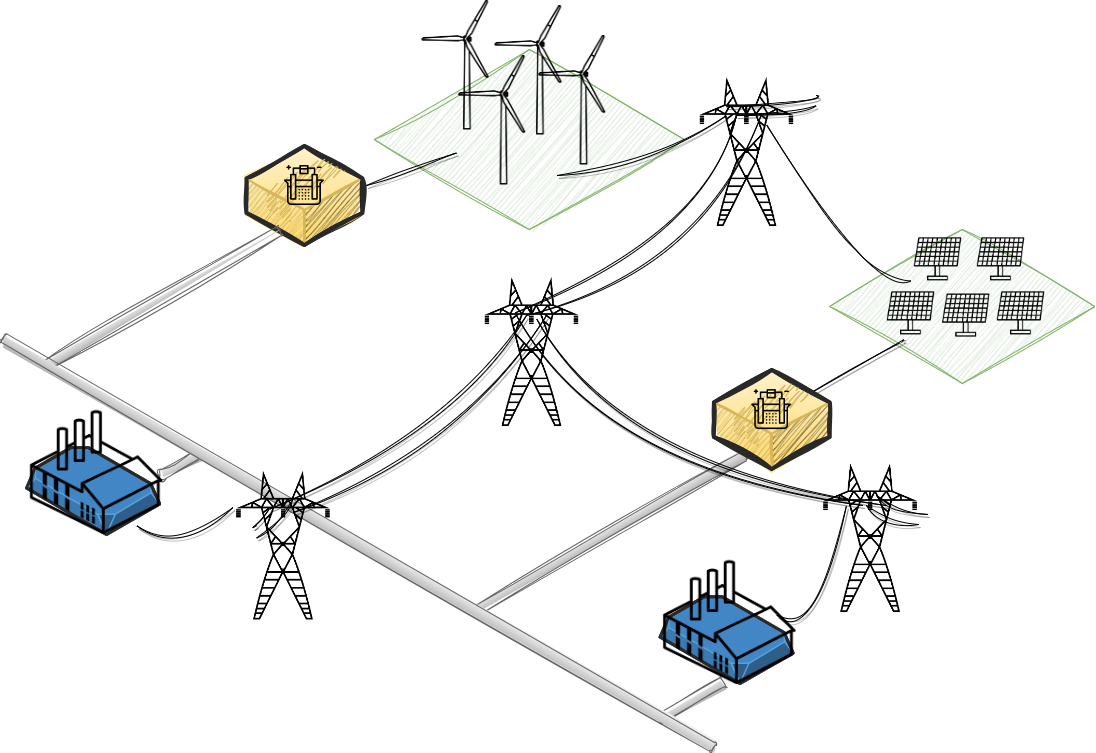}
    \caption{A coupled hydrogen and electricity network, where renewable energy sources are connected to the power grid for immediate use and some power can be used to power PtG stations, which are connected to the gas pipeline. GtP stations are depicted in the lower left and right, which uses hydrogen to produce power. In this system, hydrogen can be stored in the pipeline for later use. PtG and GtP stations are located at conversion nodes. The power lines are mathematically represented by edges $\mathcal{E}^E$ of the graph $\mathcal{G}^E$. The pipes in the gas pipeline are represented by edges $\mathcal{E}^H$ in the graph $\mathcal{G}^H.$}
    \label{fig:networks}
\end{figure}

\begin{figure}
    \centering
    \includegraphics[height=8cm]{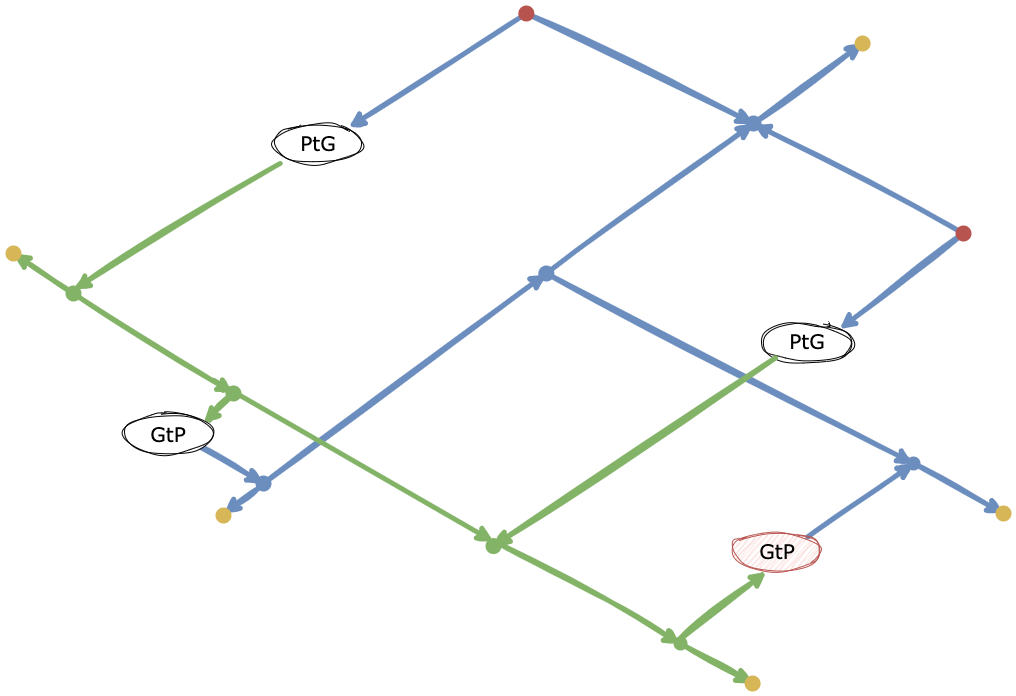}
    \caption{Representation of Figure~\ref{fig:networks} into abstract components. PtG/GtP nodes connect the hydrogen network (in green) and electricity network (in blue). Sale and generation nodes are highlighted in yellow and red, respectively. Arrows represent direction of flow.}
    \label{fig:networks-abstract}
\end{figure}


\subsection{Agent's decisions and objective}
\label{sec:agents objective}

For our high-fidelity model, we confine the time horizon $T$ and consider a short-term situation, which focuses on the operation of already existing  production 
and conversion facilities owned by the agents. At the time of making their decisions,  all cost aspects related to long run investment are already sunk and only short-term variable cost of operation are relevant for firms' decisions.  
\begin{table}[]
    \centering
    \begin{tabular}{|c|c|c|c|}
    \hline
    Variable & Type & Location & Dimension \\
    \hline
        $g_i$ & generation & $\mathcal{V}^g$ & $n_g$\\
        $s_i$ & sale & $\mathcal{V}^s$ &$n_s$\\
        $c_i$ & conversion & $\mathcal{V}^{\textup{PtG}}$ & $n_c$ \\
        $\hat{p}_i$ &pressure & $\mathcal{V}_\partial^{H}$ &$n_h$\\
        \hline
    \end{tabular}
    \caption{Decision variables for agent $i$ with their location. The dimension corresponds to the total number of nodes in the respective set.}
    \label{table:decision_variables}
\end{table}
\paragraph{Decisions.}
At any point $t\in [0,T]$, firms can make different strategic decisions to maximize their respective revenue. In our setting, the decision variables are ``generation,'' ``sale,''
``conversion,'' and ``pressure,'' respectively; see Table~\ref{table:decision_variables} for a list of these variables along with their ``locations,'' i.e., the associated set of nodes. 
\begin{itemize}
    \item \textbf{Generation}: $g_{i\nu}(t)$ is the rate at which a quantity (in MWh) of electricity generated by agent $i$ at node $\nu \in \mathcal{V}^g$ at time $t$ and $g_i(t) = (g_{i\nu}(t))_{\nu \in \mathcal{V}^g}$ is the vector of the power generated at all generation nodes. All quantities considered as a vectorial function over the time interval satisfy  $g_i \in H^1(0,T)^{n_g}$~\footnote{Here, the Sobolev space $H^1(0,T)$ denotes the set of functions in the Lebesgue space $L^2(0,T)$ with weak derivative in $L^2(0,T)$; see \cite{Adams2003} for a general definition and further properties of these spaces. Its $k$th product is denoted by $H^1(0,T)^{k} = \prod_{j=1}^{k} H^1(0,T)$. The function space regularity is dictated by the regularity needed for flow at the boundary; see also Section~\ref{sec:existence}. }.  A generation node in $\mathcal{V}^g$ can be either a renewable solar/wind plant or a GtP station.
    \item \textbf{Sale}: $s_{i\nu}(t)$ is the rate at which a quantity (in MWh) of electricity or hydrogen is sold by agent $i$ at $\nu \in \mathcal{V}^s$ at time $t$ and $s_i(t) = (s_{i\nu}(t))_{\nu \in \mathcal{V}^s}$ is the vector of all its sales. As before, we have $s_i \in H^1(0,T)^{n_s}$. Sales of a given commodity are made on dedicated sale nodes $\mathcal{V}^s$.
    \item \textbf{Conversion}: $c_{i\nu}(t)$ is the rate at which a quantity (in MWh) of electricity is used by agent $i$ at a PtG node $\nu \in \mathcal{V}^{\textup{PtG}}$ at time $t$. These quantities are collected in the vector $c_{i}(t) = (c_{i\nu}(t))_{\nu \in \mathcal{V}^{\textup{PtG}}}$. Again, we have $c_i \in H^1(0,T)^{n_c}$. 
    \item \textbf{Pressure}: $\hat{p}_{i\nu}(t)$ is the pressure (in Pa=$N/m^2$) dictated by agent $i$ at boundary nodes $\mathcal{V}_\partial^H$ at time $t$. The pressures are collected in the vector $\hat{p}_{i}(t)=(\hat{p}_{i\nu}(t))_{\nu \in \mathcal{V}_\partial^{H}}$. The function $\hat{p}_i(\cdot)$ is assumed to be in $H^1(0,T)^{n_h}$, where $n_h$ is the total number of nodes in $\mathcal{V}_{\partial}^H$. 
  \end{itemize}

As we will show in Lemma~\ref{lemma:regularity-PDE}, the transport of gas in the network is entirely dictated by the pressures and mass flows given at the boundary nodes $\mathcal{V}^H_{\partial}$. Mass flow, however, can be eliminated as a decision variable. Let $\eta_\nu$ be a measure of efficiency at station $\nu$ in the conversion from MWh of electricity to the equivalent quantity of gas in kg. We impose the following conditions from conversion/generation/sales to mass flow:
\begin{equation}
\label{eq:conversion-conditions}
\hat{q}_{i\nu}(t) =   
\begin{cases}
\eta_{\nu} c_{i\nu}(t), \quad & \text{if } \nu \in \mathcal{V}^{\textup{PtG}},\\
\eta_{\nu} g_{i\nu}(t), & \text{if } \nu \in \mathcal{V}^{\textup{GtP}},\\
\eta_{\nu}  s_{i\nu}(t), & \text{if } \nu \in \mathcal{V}^s \cap \mathcal{V}^H.
    \end{cases}
\end{equation}
As with the pressure variable, we write $\hat{q}_{i}(t)=(\hat{q}_{i\nu}(t))_{\nu \in \mathcal{V}_\partial^{H}}$, and $\hat{q}_i(\cdot)$ is assumed to be a function in $H^1(0,T)^{n_h}$. 

The full set of decisions made by agent $i$ is
\begin{equation}
\label{eq:definition-decision-space}
u_i = (g_i,s_i,c_i,\hat{p}_i) \in U_i:= H^1(0,T)^{n_g} \times H^1(0,T)^{n_s} \times H^1(0,T)^{n_c} \times  H^1(0,T)^{n_h}. 
\end{equation}
We further collect the decisions of all $N$ agents in the vector 
\[u = (u_1, \dots, u_N) \in U:=\prod_{i=1}^N U_i,\]
and use the notation $u_{-i}=(u_1, \dots, u_{i-1}, u_{i+1}, \dots, u_N)$ to denote the vector of decisions formed by all agents except the $i$th one as well as $(u_i, u_{-i}) = (u_1, \dots, u_N)$ to emphasize agent $i$ in relation to the other agents. Naturally, each agent's decisions are bounded. To accommodate this, we impose box-type constraints on the associated decision variables to be satisfied pointwise in time.
Note, moreover, that our setup models short-term interaction of firms in a network environment for a comparatively short horizon of time given by $[0,T]$. Clearly, before and after the considered horizon, the networks are supposed to be regularly operated. Our model does not explicitly provide a prediction for what happens beyond the time horizon considered. To avoid unrealistic outcomes, such as a complete depletion of all hydrogen contained in the network at the end of the horizon considered, we make the following additional assumption, which requires that firms over the entire horizon cannot extract more from the network than they inject:
\begin{equation}
\label{eq:decision-variable-restriction}
 \int_0^T \Big( \sum_{\nu \in \mathcal{V}^{\textup{PtG}}}  q_{i\nu}(\tau) - \sum_{\nu' \in \mathcal{V}^{\textup{GtP}}} q_{i\nu'}(\tau) - \sum_{\nu'' \in \mathcal{V}^{s}\cap \mathcal{V}^H} q_{i\nu''}(\tau) \Big)\D \tau  \geq 0 \;\; \forall i \in \left\{1,\hdots,N \right\}.
\end{equation}

In sum, private restrictions on the agents' decisions are captured by the constraints
\begin{equation}
\label{eq:Uad}
\begin{aligned}
    u_i \in U_{\textup{ad}, i} \coloneqq \{(g_i,s_i,c_i,\hat{p}_i) \in U_i \mid \, &\eqref{eq:decision-variable-restriction}\text{ satisfied and } \\
    & 0 \leq g_{i\nu}(t)\leq g_\nu^{\max} \, \forall \nu \in \mathcal{V}^g,  \\
    &0 \leq s_{i\nu}(t) \leq s_\nu^{\max}\, \forall \nu \in \mathcal{V}^s, \\
    & 0 \leq c_{i\nu}(t) \leq c_\nu^{\max} \,\forall \nu \in \mathcal{V}^{\textup{PtG}} \\
    & 0 \leq \hat{p}_{i\nu}(t) \leq p_\nu^{\max} \,\forall \nu \in \mathcal{V}^H_\partial    \,\,  \text{ a.e. }t \in [0,T] \}.
\end{aligned}
\end{equation}
We also define $U_{\rm ad}:=\prod_{i=1}^N U_{\textup{ad}, i}$.

\paragraph{Objective.}
We consider here a market setup in which agents generate revenue according to a Cournot--Nash model. Firms are supposed to face a common market demand both for electricity and hydrogen. Demand is supposed to be uncertain over time, which we model with the help of the complete probability space $(\Omega, \mathcal{F}, \pP)$, where $\Omega$ is the sample space, $\mathcal{F} \subset 2^{\Omega}$ is the $\sigma$-algebra of possible events, and $\pP\colon \Omega \rightarrow [0,1]$ is a probability measure (which is known by all agents).
Given a time $t \in [0,T]$ and a scenario $\omega \in \Omega$ of possible demand, the price $\pi_\nu$ of the corresponding commodity at node $\nu \in \mathcal{V}^{s}$ is a stochastic process 
given by the parameterized (by $\omega$) inverse demand function 
\begin{equation}
\label{eq:inverse-demand-function}
    \pi_\nu(t,\omega) := a_\nu(t,\omega) - b_\nu(t,\omega) \sum_{i=1}^N s_{i\nu}(t).
\end{equation}

The intercept $a_\nu(t,\omega)\in [0,a^{\max}]$  and slope $b_\nu(t,\omega) \in [0,b^{\max}]$ are dependent on the node $\nu$, so that prices of the commodities (possibly depending on their strategic position in the network) can be modeled separately. 
The parameter $a_\nu(t,\omega)$ can be interpreted as a per-unit reserve price from which on customers start consuming at all. For larger quantities, customers are willing to pay a lower per-unit price. For large quantities, when reaching a satiation point, this willingness to pay becomes zero. This is governed by the parameter $b_\nu(t,\omega)$ in our setup. Both parameters are assumed to subject to uncertainty $\omega$. That is, for different realizations of $\omega$ different reserve prices and different satiation points might occur.

This setup for our inverse demand functions is based on quasi-linear preferences; consumption choices (and thus also inverse demand) are independent of the specific financial endowment of consumers. This is a very standard approach to model strategic interactions in markets (see \cite{whinston1995microeconomic}, also for some fundamental notions on inverse demand functions).
Moreover, our specification of market prices depending linearly on demand (inducing
quadratic consumer surplus in the objective functions) is very common in the literature, providing computational approaches for solving equilibrium problems in
energy markets; see, e.g., \cite{chyong2014strategic,egging2013benders,yang2016exploration} for some examples.
We denote the short-term variable (nonnegative) cost to agent $i$ associated with an extra unit of electricity produced or converted at node $\nu$ by $\kappa_{i\nu}$. Typically those short-term costs are small, so many studies (for more details see  e.g.~\cite{kostFraunhofer2024} and ~\cite{shahid17}) disregard them, but we include them to make our setup more flexible. We, however, exclude intertemporal dynamic aspects such as maintenance-cycles, startup cost, and ramping costs possibly associated with conversion between electricity and hydrogen, see e.g.~\cite{shahid17}.


The expected profit of agent $i$ over the entire time period $[0,T]$ given the sales ($s_i,s_{-i})$ is then 
\begin{equation*}
\label{eq:parametrized-objective-function}
   \int_\Omega  \int_0^T \left( \sum_{\nu \in \mathcal{V}^s}   s_{i\nu}(\tau) \pi_\nu(\tau,\omega) - \sum_{\nu' \in \mathcal{V}^g} g_{i\nu'}(\tau) \kappa_{i\nu'}     -\sum_{\nu'' \in \mathcal{V}^{\textup{PtG}}}  c_{i\nu''}(\tau)  \kappa_{i\nu''}   \right) \D \tau \D \pP(\omega).
\end{equation*}
By linearity and essential boundedness of the coefficients $a(t,\omega)$ and $b(t,\omega)$, the above objective simplifies since
\begin{align*}
&\int_\Omega \int_0^T \sum_{\nu \in \mathcal{V}^s}   s_{i\nu}(\tau) \pi_\nu(\tau,\omega) \D t \D \pP(\omega) = \int_0^T \sum_{\nu \in \mathcal{V}^s}   s_{i\nu}(\tau) \int_\Omega \pi_\nu(\tau,\omega) \D \pP(\omega) \D t\\
& \quad =  \int_0^T \sum_{\nu \in \mathcal{V}^s}   s_{i\nu}(\tau) \int_\Omega \left(a_\nu(\tau,\omega) - b_\nu(\tau,\omega) \sum_{i=1}^N s_{i\nu}(\tau)\right) \D \pP(\omega) \D t\\
& \quad = \int_0^T \sum_{\nu \in \mathcal{V}^s}   s_{i\nu}(\tau) \left(\bar{a}_\nu(\tau) - \bar{b}_\nu(\tau) \sum_{i=1}^N s_{i\nu}(\tau)\right)  \D t,
\end{align*}
where $\bar{a}_\nu(t) := \int_\Omega a_\nu(t,\omega) \D \pP(\omega)$ is the average intercept and $\bar{b}_\nu(t):= \int_\Omega b_\nu(t,\omega)\D \pP(\omega)$ is the average slope. In other words, the objective can be expressed as a deterministic one.
To summarize, the agent's objective function is defined by
\begin{align*}
 f_i(u_i,u_{-i}) :=  \int_0^T  & \left[\sum_{\nu \in \mathcal{V}^s}   s_{i\nu}(\tau) \left(\bar{a}_\nu(\tau) - \bar{b}_\nu(\tau) \sum_{i=1}^N s_{i\nu}(\tau)\right) \right.  \\
 &\left. \quad -\sum_{\nu' \in \mathcal{V}^g} g_{i\nu'}(\tau) \kappa_{i\nu'}     -\sum_{\nu'' \in \mathcal{V}^{\textup{PtG}}}  c_{i\nu''}(\tau)  \kappa_{i\nu''} \right]\D \tau.
\end{align*}
For simplicity, we write the objective as being defined on $U$.

As a remark, we assumed the affine linear structure \eqref{eq:inverse-demand-function} for the inverse demand function, but in principle, this could be generalized to nonlinear demand functions. For a result given later in Theorem~\ref{thm:NI-function-lsc}, it would suffice to consider concave demand functions.
We mention that there is extensive literature on standard one-shot Cournot setups in the absence of any network constraints, which establishes more general assumptions required on inverse demand functions to guarantee existence and uniqueness of the market equilibrium (see e.g. \cite{vives1999oligopoly}). This could provide guidance on how to further extend these results to our setup involving network constraints.

\subsection{Operational constraints}
\label{sec:cons-elec}

In this section, we detail constraints on the operations of the electricity network and gas network, respectively. In our framework, agents are assumed to directly respect all constraints resulting from network flows. As agents share the same electricity and gas network, the subsequent constraints are shared constraints of the resulting GNEP stated in Section~\ref{sec:summary-model}.

Note that in  liberalized energy markets, a transmission system operator (TSO) with its own rules and goals is typically responsible for the proper functioning of network transport.  The TSO enforces a so-called congestion management regime, which imposes financial incentives and technical limits on the behavior of market participants. Real congestion management regimes organized by a TSO do not ideally reflect the physical network constraints, but induce some kind of inefficiencies. In contrast, our setup refrains from explicitly modelling the TSO and its specific objectives. This corresponds to a situation in which an ideal congestion management regime  perfectly conveys transmission constraints to market participants.


\paragraph{Electricity network} 
On both networks, we assumed that the underlying graph is finite, directed, connected, and acyclic. Finite and connected networks are standard in electricity market literature~\cite{liu04ptdf,Wood1996PowerGO}. Some seminal works in electricity market literature~\cite{notes-anthony21,anthony17stochastic} also use directed graphs; bidirectional flows can, however, be incorporated into our model by adding edges.  Agents can generate, sell, or convert electricity. On the power network $\mathcal{G}^E$, we impose \textit{generation-conversion-sales balance} for each agent, i.e., for $i=1,\ldots, N$, we require
\[
  \sum_{\nu \in \mathcal{V}^E} (s_{i\nu}(t) + c_{i\nu}(t)-g_{i\nu}(t)) = 0 \quad  \text{a.e.~in } (0,T),
\]
meaning that the sum of all power generated by an agent must be immediately sold or converted to hydrogen. Additionally, we impose \textit{transmission limits}, i.e., for every power line $e\in\mathcal{E}^E$, we have
 \[  \sum_{\nu \in \mathcal{V}^E} \hat{A}_{e\nu} \sum_{i =1}^N (s_{i\nu}(t) + c_{i\nu}(t)-g_{i\nu}(t)) \leq \theta_e \quad  \text{a.e.~in } (0,T) \]
 for given capacity limits $\theta_e>0$. (The above constraint becomes double sided when the electrical network is represented by an undirected graph). Note that $\hat{A}_{e\nu}$ represents the linear load flow approximation or the distribution factor matrix~\cite{liu04ptdf} (say, injection shift factor). This refers to the amount of power flow induced in line $e$ of the network by unit injection/extraction of power at node $\nu$. The power distribution factor that we use in our model represents a DC-based approximation to load flow. This substitutes a combination of Kirchoff's energy and voltage laws~\cite{Wood1996PowerGO,liu04ptdf} under the assumption that the voltage phase angles play a minimal role. The distribution factor matrix takes the form
$$\hat{A}_{e\nu} = B M_a\bar{B}^{-1}, \hspace{2mm} \bar{B} = M_a^\top B M_a, \hspace{2mm} \textrm{where}$$
$$M_a = \begin{pmatrix} a_1 \\ \vdots\\ a_{|\mathcal{E}^E|} \end{pmatrix}, \hspace{3mm} a_l = \begin{pmatrix}0 \hdots \underbrace{1}_{i} \hdots \underbrace{-1}_{j} \hdots 0 \end{pmatrix}, \hspace{2mm} B = \operatorname{diag}(b_1,\ldots,b_{|\mathcal{E}^E|})
$$
Here, $M_a\in\mathbb{R}^{|\mathcal{E}^E|\times |\mathcal{V}^E|}$ refers to the adjacency matrix that reflects the network connectivity. The line $l$ connects the node $i$ to $j$, leading to the row vector $a_l \in \mathbb{R}^{|\mathcal{V}^E|}$ with non-zero entries $1$ and $-1$ at the $i^{\textrm{th}}$ and $j^{\textrm{th}}$ elements, respectively. 
The diagonal matrix $B\in\mathbb{R}^{|\mathcal{E}^E|\times |\mathcal{E}^E|}$ contains the susceptance (i.e., the inverse of the resistance) $b_l \in \mathbb{R}$ of line $l$. 

In the predominant portion of the literature~\cite{kannanoms13,GRIMM2016493}, power market operations are often confined to fixed time intervals in the sense that market clearing is considered once every 15-20 minutes. Such models are typically restricted to short-term planning and run for a time horizon of two to three days. This, however, does not naturally couple with gas networks, which are more dynamic. Continuous-time models have also been studied in the power market literature and their contributions towards policy design and model predictive control have become very important \cite{reeto08,terry16dvi}. While power markets have been studied from both Nash and Stackelberg~\cite{adler-twosettlement} (leader-follower) perspectives, the latter is applicable only when the ISO (regulator) is an agent. Transmission constraints have been addressed from both AC and DC angles. However, we note that AC load flow~\cite{fokken21} results in non-convex constraints (due to the phase angles). In contrast, DC load flow yields simpler linear constraints under certain practical assumptions on losses, allowing us  to prove the existence of equilibrium in Section~\ref{sec:existence-equilibria}.

\paragraph{Gas network constraints}
To keep the model for the gas network simple, we assume that the network $\mathcal{G}^H$ does not contain (active) compressor stations or valves, and it does not contain cycles. We note, however, that the former can be included by incorporating appropriate compressor models (see, e.g., \cite{MR4510217}, or associated engineering look-up tables \cite{compressor}) and by admitting integer controls \cite{MR3780468,hahn2017MixedIntegerPO}, cycles in graphs may lead to physically challenging situations which may complicate the solution; see, e.g., \cite{doi:10.1021/acs.iecr.0c04007}. We refer to \cite{data2040040} for example instances of (natural) gas networks.  

As the diameter of a pipe is much smaller than the length, a one-dimensional model for the gas flow in a pipe is suitable. Below,  we assume that all pipes in our network have the same diameter $d$ and cross-sectional area $A$.
%
%
For each pipe $e\in\mathcal{E}^H$ let $\ell_e>0$ denote its length, allowing us to associate $e$ with the interval $[0,\ell_e]$. The \textit{state} $y^e(x,t) = (p^e(x,t), q^e(x,t))$
of gas flow at each location $x \in (0,\ell_e)$ and each point in time $t \in (0,T)$ is given in terms of the pressure $p^e(x,t)$ (in the unit Pa=$N/m^2$) and mass flow rate $q^e(x,t)$ (in the unit kg/s). 
For the evolution of the state in a pipe $e$ we consider here the model
\begin{subequations}\label{ISO2-pipe-L-VR}
\begin{align}
p_t^e - \varepsilon p^e_{xx}+ \frac{\hat{c}^2}{A}q_x^e &= 0,\label{mh:par1}\\
q_t^e - \varepsilon q^e_{xx} +A p_x^e&=-\frac{\lambda_e \hat{c}^2 \tilde{q}^e}{2d A\tilde{p}^e}\left(2q^e- \frac{\tilde{q}^e}{\tilde{p}^e}p^e \right) - \frac{A \hat{g}\sin \alpha_e}{\hat{c}^2} p^e,\label{mh:par2}
\end{align}
\end{subequations}
where the parameters $\lambda_e$ and $\alpha_e$ denote the friction coefficient and the slope of the pipe, respectively, $\hat{c}$ is the speed of sound, and $\hat{g}$ the gravitational constant. This model relates to the ISO2 model of \cite{Domschke2017} for isothermal flow of (natural) gas with a constant compressibility factor and a linearized right-hand side around a given reference state $(\tilde{p}^e,\tilde{q}^e)$. 
The linearization is in particular suitable near an equilibrium state of the systems. One possibility to obtain suitable linearized terms is to utilize the iteration procedure of \cite{Grimm2021}. We also mention that the existence of solutions to our resulting GNEP is jeopardized by the nonlinear terms of ISO2 as these lead to a non-convex structure in infinite dimensions. In addition, the terms involving second derivatives are associated with a viscosity regularization for $0<\varepsilon (\ll 1)$ similar to \cite{Grimm2021}. 

The system described in \eqref{ISO2-pipe-L-VR} is complemented by initial, boundary, and coupling conditions. For the initial condition, it is assumed that at time $t=0$, an initial state $y(\cdot,0)$ is known. This translates to the \textit{initial conditions}
\begin{equation*}
\label{eq:IC}
p^e(x,0)=p^e_{0}(x)\quad \text{and} \quad q^e(x,0)=q^e_{0}(x)\quad \text{for a.a. }x\in (0,\ell_e),
\end{equation*}
with $p_0^e$ and $q_0^e$ given.
On boundary nodes $\mathcal{V}^H_\partial$, pressure and flow are determined by the \textit{boundary conditions} on $[0,T]$:
\begin{equation}\label{eq:BCs}
p^e(\nu,t) =  \sum_{i=1}^N \hat{p}_{i\nu}(t),\quad \text{and} \quad q^e(\nu,t)= \sum_{i=1}^N \hat{q}_{i\nu}(t) \quad \forall e \in k(\nu),
\end{equation}
with $\hat{p}_{i\nu}$ and $\hat{q}_{i\nu}$ given by agent $i$'s decisions for all $i$ and all $\nu \in \mathcal{V}_\partial^H$; recall in particular 
\eqref{eq:conversion-conditions}. We use the notation
\begin{equation*}
    k(\nu) \coloneqq \{e \in \mathcal{E}^H \mid n^e(\nu) \neq 0 \},
\end{equation*}
with
\begin{equation*}
    n^e(\nu) = \begin{cases} -1, \quad & \text{if } \nu =0,\\
\phantom{-}1, &\text{if } \nu = \ell_e,\\
    \phantom{-}0, &\text{else}.
    \end{cases}
\end{equation*}
For simplicity, we assume that $|k(\nu)| = 1$ for boundary nodes.

Here, and throughout the rest of the paper, we assume that the initial conditions are compatible with the given boundary and coupling conditions at time $t=0$. In particular, this requires that
\begin{equation*}
\label{eq:compatibility-conditions}
    p^e_0(\nu) = p^e(\nu,0) \quad \text{and} \quad q^e_0(\nu) = q^e(\nu,0) \quad \forall \nu \in \mathcal{V}^H.
\end{equation*}

The boundary conditions in \eqref{eq:BCs} constitute the only way agents can control gas flow on the network. 
\noindent For inner nodes $\nu \in \mathcal{V}^H_0$, which are not controlled by agents but are rather implicitly defined, we have the following \textit{coupling conditions} on $[0,T]$:
\begin{subequations}
\label{eq:CC-explicit}
\begin{align}
& p^e(\nu,t) = p^{e'}(\nu,t) \quad \forall e,e' \in k(\nu), \label{eq:CC-explicit-1}\\
&\sum_{e \in k(\nu)} q^e(\nu,t) n^e(\nu)=0, \label{eq:CC-explicit-2}\\
& \sum_{e \in k(\nu)}p_x^e(\nu,t) n^e(\nu)= 0, \label{eq:CC-explicit-3}\\
& {q}_x^e(\nu,t) ={q}_x^{e'}(\nu,t) \quad \forall e, e' \in k(\nu). \label{eq:CC-explicit-4}
\end{align} 
\end{subequations}

\begin{remark}
The conditions \eqref{eq:CC-explicit-1}--\eqref{eq:CC-explicit-2} are consistent with the limiting (with $\varepsilon =0$) ISO2 model from \cite{Domschke2017}.
We note that it is possible to emphasize the dependence of the viscosity parameter $\varepsilon$ in \eqref{eq:CC-explicit-3}--\eqref{eq:CC-explicit-4} by including zero-order terms there. This would come from integrating the first-order terms by parts in the expression \eqref{eq:integration-by-parts} introduced later and would result in the following conditions:
\begin{equation}
\label{eq:alternative-CCs}
\begin{aligned}
\sum_{e \in k(\nu)} \left(-\varepsilon p_x^e(\nu,t) + \frac{\hat{c}^2}{A}q^e(\nu,t)\right) n^e(\nu)= 0, \\
-\varepsilon {q}_x^e(\nu,t) + Ap^e(\nu,t)= -\varepsilon {q}_x^{e'}(\nu,t) +Ap^{e'}(\nu,t)\quad \forall e, e' \in k(\nu).
\end{aligned}
\end{equation}
This kind of coupling conditions was introduced in the paper \cite{Egger2021}, where a viscosity-regularized transport equation on networks was investigated. Of course, these conditions are consistent with \eqref{eq:CC-explicit} since by plugging \eqref{eq:CC-explicit-1}--\eqref{eq:CC-explicit-2} into \eqref{eq:alternative-CCs}, we recover \eqref{eq:CC-explicit-3}--\eqref{eq:CC-explicit-4}.
The conditions \eqref{eq:alternative-CCs} and hence \eqref{eq:CC-explicit} are in principle compatible as $\varepsilon \rightarrow 0$ with the ISO2 conditions \eqref{eq:CC-explicit-1}--\eqref{eq:CC-explicit-2} (coming from the purely hyperbolic case) if $p_x^e$ and $q_x^e$ are bounded on all edges.
While out of the scope of this paper, an analysis of the viscosity limit as $\varepsilon$ tends to zero in the style of \cite{Egger2021} will be subject of future work.
\end{remark}


The pressure and mass flow rate must remain within certain operational thresholds for every point in the network and at all times. Hence, we require on every edge $e\in\mathcal{E}^H$ that 
\begin{equation}
\label{eq:state-constraint}
    0 \leq q^e(x,t) \leq q_e^{\max}, \quad 0 \leq p^e(x,t) \leq p_e^{\max}\quad\text{for a.a. }(x,t)\in (0,\ell_e) \times [0,T]
\end{equation}
for pre-specified bounds $q_e^{\max}>0$ and $p_e^{\max}>0$. Note that the constraint \eqref{eq:state-constraint} has to be consistent with the constraint on $\hat{p}_{i\nu}$ from the agent's admissible set \eqref{eq:Uad}. We will see later in Lemma \ref{lemma:regularity-PDE} that on each pipe, pressure and flow are continuous with respect to $x$ up until the boundary. In that sense, $p_e^{\max}$ should coincide with $p_\nu^{\max}$ for $e\in k(\nu)$.



\begin{remark}
In our simplified model, the storage of gas is done within the pipeline itself; still leading to the interesting aspect of line packing, partly also contributing to the robustness when running such a network.
It is possible to allow for storage at nodes in the form of fuel cells such as the PEMC (Proton Exchange Membrane Cell). 
More details on fuel cells can be found in~\cite{osti_book88,blomen2013fuel} and details on costs can be found in~\cite{shahid17}. Storage can be modeled using the following adjustment (using the example of power-to-gas nodes). Let us assume that agent $i$ has its own individual storage facility at node $\nu$. 
Given an initial amount $Q_{i\nu}(t_0)$ of gas at $\nu$ at time $t_0$, the amount of gas in storage at a later time $t$ is defined by the following condition:
\begin{equation*}
\label{eq:Ptg-constraint}
Q_{i\nu}(t)  = \underbrace{Q_{i\nu}(t_0)}_{\text{kg at time $t_0$}} - \underbrace{\int_{t_0}^{t} \hat{q}_{i\nu}(\tau) \D \tau}_{\text{kg removed since time $t_0$}} + \underbrace{\int_{t_0}^{t} \eta_{\nu} c_{i\nu}(\tau) \D \tau.}_{\text{kg added using power}}
\end{equation*}
Naturally, in any storage facility, there cannot be a negative storage amount at any given time, and there is also a maximum capacity of storage $Q_{i\nu}^{\max}$, leading to additional storage constraints of the form $0 \leq Q_{i\nu}(t)\leq Q_{i\nu}^{\max}.$ Storage could be modeled analogously at GtP nodes and hydrogen sale nodes. To keep the model simple, we focus on storage in the form of line packing, only.
\end{remark}

\subsection{The overall model}
\label{sec:summary-model}
Summarizing, the full model proposed here is a GNEP where, given the decisions $u_{-i}$ of all other agents, the problem of agent $i$ reads
\begin{subequations}
\label{eq:gas-model-detailed}
 \begin{align}
 \label{eq:gas-model-detailed-obj}
\max_{u_i \in U_i}  \int_0^T  & \left[\sum_{\nu \in \mathcal{V}^s}   s_{i\nu}(\tau) \left(\bar{a}_\nu(\tau) - \bar{b}_\nu(\tau) \sum_{j=1}^N s_{j\nu}(\tau)\right) \right. \nonumber  \\
 &\left. \quad -\sum_{\nu' \in \mathcal{V}^g} g_{i\nu'}(\tau) \kappa_{i\nu'}     -\sum_{\nu'' \in \mathcal{V}^{\textup{PtG}}}  c_{i\nu''}(\tau)  \kappa_{i\nu''} \right]\D \tau,
 \end{align}
subject to electric grid constraints
\begin{alignat}{2}
\sum_{\nu \in \mathcal{V}^E} (s_{i\nu}(t) + c_{i\nu}(t)-g_{i\nu}(t)) &= 0 \quad  \hspace{1mm} &&\text{in } (0,T) \,\forall i \in \left\{1,\hdots,N \right\},\label{eq:electricity-model-detailed-1}\\
 \sum_{\nu \in \mathcal{V}^E} \hat{A}_{e\nu} \sum_{i=1}^N (s_{i\nu}(t) + c_{i\nu}(t)-g_{i\nu}(t)) &\leq \theta_e &&\text{in } (0,T) \,\forall e \in \mathcal{E}^E, \, \hspace{1mm} \label{eq:electricity-model-detailed-2}
\end{alignat}
hydrogen pipeline constraints 
\begin{alignat}{2}
&  p_t^e - \varepsilon p^e_{xx}+ \frac{\hat{c}^2}{A}q_x^e = 0   &&\text{in } e\times (0,T)\, \forall e \in \mathcal{E}^H, \label{eq:gas-model-detailed-gas-1}\\ 
& q_t^e - \varepsilon q^e_{xx} +A p_x^e=-\frac{\lambda_e \hat{c}^2 \tilde{q}^e}{2d A\tilde{p}^e}\left(2q^e - \frac{\tilde{q}^e}{\tilde{p}^e}p^e \right)-&& 
\frac{A \hat{g}\sin \alpha_e}{\hat{c}^2} p^e  \nonumber\\
& &&
\text{in } e\times (0,T)\, \forall e \in \mathcal{E}^H,\label{eq:gas-model-detailed-gas-2}\\
& p^e(x,0) = p^e_{0}(x), \quad q^e(x,0)=q^e_{0}(x) && \text{in } e\, \forall e \in \mathcal{E}^H, \label{eq:gas-model-detailed-gas-3}\\
& p^e(\nu,t) = \sum_{i=1}^N \hat{p}_{i\nu}(t), \quad q^e(\nu,t) = \sum_{i=1}^N\hat{q}_{i\nu}(t) &&\text{in } [0,T] \, \forall \nu \in  \mathcal{V}^H_{\partial} \,\forall e \in k(\nu),\label{eq:gas-model-detailed-gas-4}\\
& p^{e}(\nu,t) = p^{e'}(\nu,t) &&\text{in } [0,T]\,\forall \nu \in \mathcal{V}^H_{0} \, \forall e, e' \in k(\nu),\label{eq:gas-model-detailed-gas-5}\\
&  \sum_{e \in k(\nu)}q^e(\nu,t) n^e(\nu)= 0 &&\text{in } [0,T] \, \forall \nu \in  \mathcal{V}^H_{0},  \label{eq:gas-model-detailed-gas-6} \\
& \sum_{e \in k(\nu)} p_x^e(\nu,t) n^e(\nu)= 0 &&\text{in } [0,T] \, \forall \nu \in  \mathcal{V}^H_{0},  \label{eq:gas-model-detailed-gas-7} \\
&  {q}_x^e(\nu,t) = {q}_x^{e'}(\nu,t) \quad &&\text{in } [0,T] \,\forall \nu \in \mathcal{V}^H_{0} \, \forall e, e' \in k(\nu),\label{eq:gas-model-detailed-gas-8}\\
& 0 \leq q^e(x,t) \leq q_e^{\max}, \quad 0 \leq p^e(x,t) \leq p_e^{\max} \quad && \text{in } e\times [0,T] \, \forall e \in \mathcal{E}^H,\label{eq:gas-model-detailed-gas-9}
\end{alignat}
 electricity/hydrogen conversion constraints 
\begin{alignat}{2}
\hat{q}_{i\nu}(t) &= \eta_{\nu} c_{i\nu}(t) \qquad \qquad &&\text{in } (0,T) \, \forall \nu \in \mathcal{V}^{\textup{PtG}} \,\forall i \in\{1,\dots, N\},\label{eq:conversion-model-detailed-1}\\
\hat{q}_{i\nu}(t) &= \eta_{\nu} g_{i\nu}(t) &&\text{in }  (0,T) \, \forall \nu \in \mathcal{V}^{\textup{GtP}} \, \forall i \in\{1,\dots, N\}, \label{eq:conversion-model-detailed-2}\\
\hat{q}_{i\nu}(t) &= \eta_{\nu}  s_{i\nu}(t) &&\text{in } (0,T) \, \forall \nu \in \mathcal{V}^s\cap\mathcal{V}^H\, \forall i \in\{1,\dots, N\}, \label{eq:conversion-model-detailed-3}
\end{alignat}
and control constraints
\begin{alignat}{2}
&0 \leq g_{i\nu}(t)\leq g_\nu^{\max} \quad\quad &&\text{in } [0,T]\,\forall \nu \in \mathcal{V}^g,  \label{eq:control-model-detailed-1} \\
    &0 \leq s_{i\nu}(t) \leq s_\nu^{\max}\,  && \text{in } [0,T]\,\forall \nu \in \mathcal{V}^s, \label{eq:control-model-detailed-2}\\
    & 0 \leq c_{i\nu}(t) \leq c_\nu^{\max} &&\text{in } [0,T]\,\forall \nu \in \mathcal{V}^{\textup{PtG}},\label{eq:control-model-detailed-3}\\
    & 0 \leq \hat{p}_{i\nu}(t) \leq p_\nu^{\max} &&\text{in } [0,T]\,\forall \nu \in \mathcal{V}^H_\partial, \label{eq:control-model-detailed-4}\\
     &\int_0^T \Big( \sum_{\nu \in \mathcal{V}^{\textup{PtG}}} \hat{q}_{i\nu}(\tau) - \sum_{\nu' \in \mathcal{V}^{\textup{GtP}}} \hat{q}_{i\nu'}(\tau) - \sum_{\nu'' \in \mathcal{V}^{s}\cap \mathcal{V}^H} &&\hat{q}_{i\nu''}(\tau) \Big)\D \tau  \geq 0. \label{eq:control-model-detailed-5}
\end{alignat}
\end{subequations}

\section{Existence of an equilibrium}
\label{sec:existence}
In this section, we investigate the existence of equilibria to the GNEP given by the agents' problems \eqref{eq:gas-model-detailed}. To this end, we present in Section \ref{sec:assumptions} our analytic framework and technical assumptions. The existence of solutions to the PDE system is given in Section \ref{sec:existence-PDE}.

\subsection{Assumptions}
\label{sec:assumptions}
Before we introduce the mathematical assumptions needed to establish existence of an equilibrium, we provide some notation for the function spaces used for the gas constraints. 
Given a Banach space $X$, the norm and dual space are denoted by $\lVert \cdot \rVert_X$ and $X^*$, respectively. We shall use the shorthand $L^2(e) := L^2(0,\ell_e)$ for the Lebesgue space of square integrable functions on $(0,\ell_e)$ and the product space
\[
 H\coloneqq \Big(\prod_{e \in \mathcal{E}^H} L^2(e)\Big)^2 = \{ (p,q) \mid p^e \in L^2(e), q^e \in L^2(e) \, \forall e \in \mathcal{E}^H \}.
 \]
For test functions, we define the following coupling conditions at inner nodes 
\begin{equation} 
\label{eq:test-function-conditions}
\begin{aligned}
&\quad  p^e(\nu) = p^{e'}(\nu) && \forall \nu \in  \mathcal{V}^H_{0} \, \forall e,e' \in k(\nu),\\
&\sum_{e \in k(\nu)} q^e(\nu) n^e(\nu)=0 &&\forall \nu \in  \mathcal{V}^H_{0},
\end{aligned}
\end{equation}
so that the test function space is defined by
\[
V  \coloneqq \Big\lbrace (p,q) \in \Big(\prod_{e \in \mathcal{E}^H} H^1(e)\Big)^2 \mid (p,q) \text{ satisfy \eqref{eq:test-function-conditions} in the sense of traces} \Big\rbrace.
\]
For the analysis in the following section, we also use the following space with disappearing boundary
\begin{equation}
\label{eq:V0-definition}
\begin{aligned}
V_0 \coloneqq \Big\lbrace (p,q) \in V \mid & \, v \text{ satisfies } v(\nu,\cdot)=0 \, \forall \nu \in \mathcal{V}_\partial^H \text{ in the sense of traces}\Big\rbrace.
\end{aligned}
\end{equation}
The separable Hilbert space $H$ is equipped with the inner product 
\begin{equation}
\label{eq:scalar-product1}
    (v,w)_{H} = \sum_{e \in \mathcal{E}^H} (v^{e,p},w^{e,p})_{L^2(e)} + (v^{e,q},w^{e,q})_{L^2(e)} 
\end{equation}
and the separable Hilbert spaces $V, V_0$ are equipped with the inner product
\begin{equation}
\label{eq:scalar-product2}
\begin{aligned}
    (v,w)_{V} = (\nabla v,\nabla w)_{H} + (v,w)_{H};
    \end{aligned}
\end{equation}
see Section \ref{sec:function-spaces}. For a Banach space $X$, we consider the vector-valued mappings
\[
t \in [0,T] \mapsto z(t)=(z^p(t),z^q(t)) \in X.
\]
Provided $X$ is a separable Hilbert space, then for $t>0$, the space
\begin{align*}
L^2(0,t;X) = \Big\lbrace z\colon [0,t] &\rightarrow X \textup{ strongly measurable } \mid \\
&\lVert z\rVert_{L^2(0,t;X)}:=\Big(\int_0^t \lVert z(\tau) \rVert_X^2\D \tau \Big)^{1/2} <\infty \Big\rbrace
\end{align*}
is a Hilbert space. For the solution space, we define 
\[    W(0,t;H,V) \coloneqq \{v \in L^2(0,t;V)\mid \partial_t v \in L^2(0,t; V^*)\}
\]
equipped with the norm
\[
\lVert z \rVert_{W(0,T;H,V)} := \sqrt{\lVert z \rVert_{L^2(0,T;V)}^2 +  \lVert z_t \rVert_{L^2(0,T;V^*)}^2}.
\]

We will now present the overarching assumptions made in our model.

\begin{assumption}
\label{ass:SGNEP}
\subasu (Agents' objectives) We assume that  $\bar{a}_{\nu} \in L^\infty(0,T)$ and $\bar{b}_{\nu} \in L^\infty(0,T)$ are nonnegative a.e.~in $(0,T)$ and for all $\nu \in \mathcal{V}^s$. The costs $\kappa_{i\nu}$ are nonnegative for all $i$ and $\nu \in \mathcal{V}^g \cup \mathcal{V}^{\textup{PtG}}$.\\
\subasu (Private restrictions) The efficiency $\eta_\nu$ is positive for $\nu \in \mathcal{V}^H_\partial$. So are the bounds $g_{\nu}^{\max}$ ($\nu \in \mathcal{V}^H_{g}$), $s_{\nu}^{\max}$ ($\nu \in \mathcal{V}^H_s$), $c_{\nu}^{\max}$ ($\nu \in \mathcal{V}^{PtG}$), and $p_{\nu}^{\max}$ ($\nu \in \mathcal{V}^H_\partial$).\label{ass:private-restrictions}\\
\subasu (Power grid) The parameters $\theta_e > 0$ and $\hat{A}_{e\nu}$ are bounded for every $e \in \mathcal{E}^E$ and $\nu \in \mathcal{V}^E$. 
\label{ass:operational-grid}\\
\subasu (Gas network) The parameters $\varepsilon, \hat{c},A,d,\alpha_e,\lambda_e$ are  positive for all $e \in \mathcal{E}^H$. The reference points $\tilde{p}^e,\tilde{q}^e$ belong to $C([0,\ell_e]\times[0,T])$ and $\tilde{p}^e \neq 0$ a.e.~in $e \times (0,T)$. The initial conditions satisfy $y_0:=(p_0,q_0) \in H$.\label{ass:PDE}\\
\subasu (State constraints) We have $p^{\max}_e>0$ and $ q^{\max}_e>0$ for every $e \in \mathcal{E}^H$.\label{ass:operational-pipe}
\end{assumption}

\subsection{Existence of solutions to PDE}
\label{sec:existence-PDE}
Now we prove that the PDE is uniquely solvable for any combination of decisions made by the agents at the boundary nodes. Moreover, the continuous dependence on the agents' collective decisions at the boundary nodes $\mathcal{V}_\partial^H$ is shown. 
\begin{lemma}
\label{lemma:regularity-PDE} 
Suppose Assumption \ref{ass:PDE} is satisfied. Then, for every $(\mathfrak{p},\mathfrak{q}) \in H^1(0,T)^{n_h}\times H^1(0,T)^{n_h}$ compatible with the initial condition $y_0 = (p_0,q_0)$, there exists a unique weak solution $y=(p,q) \in W(0,T;H,V)$ satisfying \eqref{eq:gas-model-detailed-gas-1}--\eqref{eq:gas-model-detailed-gas-8}. Moreover, there exists a constant $C>0$ such that the following a priori estimate holds for all $t\in (0,T]$:
\begin{equation}
\label{eq:a-priori-estimate}
\lVert y\rVert_{L^2(0,t;V)}^2 + \lVert  y_t\rVert_{L^2(0,t;V^*)}^2 \leq 
C \left( \|y_0\|_H^2 + \sum_{\nu \in \mathcal{V}_\partial^H} (\lVert \mathfrak{p}^{\nu}\rVert^2_{H^1(0,t)} +\lVert \mathfrak{q}^{\nu}\rVert^2_{H^1(0,t)})  \right).
\end{equation}
\end{lemma}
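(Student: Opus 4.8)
The plan is to recast \eqref{eq:gas-model-detailed-gas-1}--\eqref{eq:gas-model-detailed-gas-8} as an abstract linear parabolic Cauchy problem in the Gelfand triple $V_0\hookrightarrow H\hookrightarrow V_0^*$ and to apply the classical Lions theory for parabolic equations with a bounded, time-dependent bilinear form satisfying a G\aa rding inequality. The first step is to homogenize the Dirichlet-type boundary conditions \eqref{eq:gas-model-detailed-gas-4} (with the given pair $(\mathfrak{p},\mathfrak{q})$ in place of the nodal sums). Since $(\mathfrak{p},\mathfrak{q})\in H^1(0,T)^{n_h}\times H^1(0,T)^{n_h}$ and every $\nu\in\mathcal{V}^H_\partial$ is a leaf ($|k(\nu)|=1$), a standard cut-off construction produces a lift $E$ that equals $(\mathfrak{p}^\nu(t),\mathfrak{q}^\nu(t))$ at each $\nu\in\mathcal{V}^H_\partial$, vanishes in a neighbourhood of every interior node, satisfies $E(t)\in V$ for a.e.\ $t$ (so \eqref{eq:test-function-conditions} holds trivially), belongs to $W(0,T;H,V)$, and obeys $\lVert E\rVert^2_{W(0,t;H,V)}\le C\sum_{\nu\in\mathcal{V}^H_\partial}(\lVert\mathfrak{p}^\nu\rVert^2_{H^1(0,t)}+\lVert\mathfrak{q}^\nu\rVert^2_{H^1(0,t)})$. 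Writing $y=\tilde y+E$, the shifted unknown should satisfy $\tilde y(t)\in V_0$, $\tilde y(0)=y_0-E(0)\in H$, and, for all $v\in V_0$ and a.e.\ $t$,
\[
\langle\tilde y_t(t),v\rangle_{V_0^*,V_0}+\mathfrak{a}(t;\tilde y(t),v)=-\langle E_t(t),v\rangle-\mathfrak{a}(t;E(t),v)=:\langle G(t),v\rangle,
\]
where $\mathfrak{a}(t;\cdot,\cdot)$ collects the viscous form $\varepsilon(\nabla\,\cdot\,,\nabla\,\cdot\,)_H$, the first-order transport contributions stemming from $\tfrac{\hat c^2}{A}q^e_x$ in \eqref{eq:gas-model-detailed-gas-1} and $Ap^e_x$ in \eqref{eq:gas-model-detailed-gas-2}, and the zeroth-order reaction terms from the right-hand side of \eqref{eq:gas-model-detailed-gas-2}, with the first-order terms left un-integrated by parts; smooth solutions of this formulation then recover \eqref{eq:gas-model-detailed-gas-1}--\eqref{eq:gas-model-detailed-gas-2} together with the natural coupling conditions \eqref{eq:gas-model-detailed-gas-7}--\eqref{eq:gas-model-detailed-gas-8}, while the essential conditions \eqref{eq:gas-model-detailed-gas-5}--\eqref{eq:gas-model-detailed-gas-6} are encoded in $V_0\subset V$.

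Second, I would verify the hypotheses for $\mathfrak{a}$. Boundedness $|\mathfrak{a}(t;u,v)|\le M\lVert u\rVert_{V_0}\lVert v\rVert_{V_0}$ uniformly in $t$ follows from Assumption~\ref{ass:PDE}: $\varepsilon,\hat c,A,d,\lambda_e,\alpha_e$ are fixed positive constants and the zeroth- and first-order coefficients are essentially bounded on each $e\times(0,T)$, since $\tilde p^e,\tilde q^e$ are continuous on the compact set $[0,\ell_e]\times[0,T]$ with $\tilde p^e$ bounded away from $0$. For the G\aa rding inequality, test with $v=(p,q)=\tilde y\in V_0$ and integrate $A\sum_e\int_e p^e_x q^e\,\D x$ by parts: the interior-node boundary terms vanish by continuity of $p$ and Kirchhoff's law for $q$ (both built into $V$) together with the homogeneous data at $\mathcal{V}^H_\partial$, so the first-order part of $\mathfrak{a}(t;\tilde y,\tilde y)$ equals $(\tfrac{\hat c^2}{A}-A)\sum_e\int_e p^e q^e_x\,\D x$ (and vanishes outright after the symmetrizing rescaling $q\mapsto(\hat c/A)q$). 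Absorbing it into the viscous term by Young's inequality and the zeroth-order part into $\lVert v\rVert_H^2$ yields
\[
\mathfrak{a}(t;v,v)\ge\tfrac{\varepsilon}{2}\lVert\nabla v\rVert_H^2-\beta\lVert v\rVert_H^2\ge\tfrac{\varepsilon}{2}\lVert v\rVert_{V_0}^2-(\beta+\tfrac{\varepsilon}{2})\lVert v\rVert_H^2\qquad\forall v\in V_0,
\]
with $\beta$ independent of $t$. Since $V_0$ is a separable Hilbert space densely and continuously embedded in $H$, $G\in L^2(0,T;V_0^*)$, and $y_0-E(0)\in H$, the classical theorem (implemented e.g.\ by a Galerkin scheme in a basis of $V_0$, the uniform a priori bound below, passage to the weak limit using linearity, and the embedding $W(0,T;H,V_0)\hookrightarrow C([0,T];H)$ for the initial datum) gives a unique $\tilde y\in W(0,T;H,V_0)$; then $y=\tilde y+E\in W(0,T;H,V)$ (the bound for $\partial_t y$ in $V^*$ holds because $v\mapsto-\mathfrak{a}(t;y(t),v)-\langle E_t(t),v\rangle$ is bounded on all of $V$) solves \eqref{eq:gas-model-detailed-gas-1}--\eqref{eq:gas-model-detailed-gas-8} weakly, and uniqueness for the original problem follows since a difference of two solutions has vanishing initial and boundary data and hence vanishes by the estimate below.

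Third, for \eqref{eq:a-priori-estimate}, test the homogenized equation with $v=\tilde y(t)$, use $\langle\tilde y_t,\tilde y\rangle=\tfrac12\tfrac{d}{dt}\lVert\tilde y\rVert_H^2$, the G\aa rding inequality, and $|\langle G,\tilde y\rangle|\le\tfrac{\varepsilon}{4}\lVert\tilde y\rVert_{V_0}^2+C\lVert G\rVert_{V_0^*}^2$ to get $\tfrac{d}{dt}\lVert\tilde y\rVert_H^2+\tfrac{\varepsilon}{2}\lVert\tilde y\rVert_{V_0}^2\le C(\lVert\tilde y\rVert_H^2+\lVert G\rVert_{V_0^*}^2)$; Gr\"onwall's inequality on $(0,t)$ then bounds $\lVert\tilde y(t)\rVert_H^2+\int_0^t\lVert\tilde y\rVert_{V_0}^2\,\D\tau$ by $e^{Ct}(\lVert\tilde y(0)\rVert_H^2+\int_0^t\lVert G\rVert_{V_0^*}^2\,\D\tau)$. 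Now $\lVert G(\tau)\rVert_{V_0^*}\le\lVert E_t(\tau)\rVert_H+C\lVert E(\tau)\rVert_V$ and $\lVert\tilde y(0)\rVert_H^2\le 2\lVert y_0\rVert_H^2+2\lVert E(0)\rVert_H^2$, with $\lVert E(0)\rVert_H^2\le C\sum_{\nu\in\mathcal{V}^H_\partial}(|\mathfrak{p}^\nu(0)|^2+|\mathfrak{q}^\nu(0)|^2)\le C\sum_{\nu\in\mathcal{V}^H_\partial}(\lVert\mathfrak{p}^\nu\rVert^2_{H^1(0,t)}+\lVert\mathfrak{q}^\nu\rVert^2_{H^1(0,t)})$ by $H^1(0,t)\hookrightarrow C([0,t])$; together with the lift bound from the first step this controls $\lVert\tilde y(t)\rVert_H^2+\lVert\tilde y\rVert^2_{L^2(0,t;V)}$ by the right-hand side of \eqref{eq:a-priori-estimate}. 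Re-inserting into the equation, $\lVert\tilde y_t(\tau)\rVert_{V_0^*}\le C\lVert\tilde y(\tau)\rVert_{V_0}+\lVert G(\tau)\rVert_{V_0^*}$ gives the matching bound on $\lVert\tilde y_t\rVert^2_{L^2(0,t;V_0^*)}$, and adding $E$ and $E_t$ back in (already controlled) completes \eqref{eq:a-priori-estimate}.

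The main obstacle is conceptual rather than computational: pinning down the correct weak formulation on the network graph, namely separating the essential conditions ($p$-continuity, the $q$-Kirchhoff law, and the prescribed boundary values, encoded through $V$, $V_0$ and the lift $E$) from the natural ones (\eqref{eq:gas-model-detailed-gas-7}--\eqref{eq:gas-model-detailed-gas-8}, recovered from $\mathfrak{a}$), and constructing a lift $E\in W(0,T;H,V)$ that is consistent with the interior coupling conditions defining $V$. The only mildly delicate analytic point afterwards is the G\aa rding inequality, because the first-order transport terms $\tfrac{\hat c^2}{A}q^e_x$ and $Ap^e_x$ are not sign-definite; this is dealt with either by the symmetrizing rescaling $q\mapsto(\hat c/A)q$, after which they are skew-symmetric and drop out of the energy identity, or simply by absorbing them into the viscous term $\varepsilon\lVert\nabla v\rVert_H^2$ via Young's inequality---which is precisely why the viscosity $\varepsilon>0$ is needed. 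The remaining ingredients are textbook linear parabolic theory.
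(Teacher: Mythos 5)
Your proposal is correct and follows essentially the same route as the paper: homogenize the Dirichlet data by a lift in $W(0,T;H,V)$ (the paper uses an edgewise affine interpolant rather than a cut-off, which is immaterial), verify boundedness and weak coercivity of the time-dependent bilinear form by absorbing the first-order transport terms into the $\varepsilon$-viscous term via Young's inequality, invoke the Lions/Galerkin existence theorem on the Gelfand triple $V_0\hookrightarrow H\hookrightarrow V_0^*$, and recover \eqref{eq:a-priori-estimate} by combining the resulting stability estimate with bounds on the lift and its time derivative in terms of $\sum_\nu(\lVert \mathfrak{p}^\nu\rVert_{H^1}^2+\lVert \mathfrak{q}^\nu\rVert_{H^1}^2)$. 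The only cosmetic differences are that you derive the energy estimate by hand via Gr\"onwall where the paper cites it from the abstract theorem, and you additionally note the symmetrizing rescaling of $q$ as an alternative to Young's inequality.
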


The regularity obtained for the state $y$ in Lemma \ref{lemma:regularity-PDE} will be sufficient to show existence of equilibria; we remark that higher regularity would be needed to establish the existence of Lagrange multipliers as was done in \cite{Grimm2021}. 
The proof of Lemma \ref{lemma:regularity-PDE} proceeds along the lines of \cite{Grimm2021}, with the additional complication that we are considering a network, instead of the case of a single pipe. Before getting to the proof, we need some preliminary results.

Let
      $\theta_1 \coloneqq \hat{c}^2/A$ 
      and $\theta_2 \coloneqq A$, and define
for each point $(x,t)$ 
\begin{equation*}
    \gamma_1^e(x,t) \coloneqq \frac{\lambda_e \hat{c}^2 \tilde{q}^e(x,t)}{d A \tilde p^e(x,t)} \quad \text{and} \quad \gamma_2^e(x,t) \coloneqq \frac{A \hat{g} \sin \alpha_e}{\hat{c}^2} -\frac{\lambda_e \hat{c}^2  (\tilde q^e(x,t))^2}{2 d A  (\tilde p^e(x,t))^2},
\end{equation*}
Then the full system of equations  \eqref{eq:gas-model-detailed-gas-1}--\eqref{eq:gas-model-detailed-gas-8} in new notation reads:
\begin{subequations}
\label{eq:gas-model-detailed'}
\begin{alignat}{2}
& p_t^e - \varepsilon p_{xx}^e+ \theta_1 q_x^e = 0   &&\text{a.e.~in } (0,\ell_e) \times (0,T)\, \forall e \in \mathcal{E}^H, \label{eq:gas-model-detailed-1'}\\ 
& q_t^e - \varepsilon q^e_{xx} +\theta_2 p_x^e +\gamma_1^e q^e + \gamma_2^e p^e =0 \quad && 
\text{a.e.~in } (0,\ell_e)\times (0,T)\, \forall e \in \mathcal{E}^H,\label{eq:gas-model-detailed-2'}\\
& p^e(x,0) = p^e_{0}(x), \quad q^e(x,0) = q^e_{0}(x) && \text{a.e.~in } (0,\ell_e)\, \forall e \in \mathcal{E}^H, \label{eq:gas-model-detailed-3'}\\
& p^e(\nu,t) = \mathfrak{p}^{\nu}(t), \quad q^e(\nu,t) = \mathfrak{q}^{\nu}(t) &&\text{a.e.~in } [0,T] \, \forall \nu \in  \mathcal{V}^H_{\partial} \,\forall e \in k(\nu),\label{eq:gas-model-detailed-4'}\\
& \sum_{e \in k(\nu)}q^e(\nu,t) n^e(\nu)= 0 &&\text{a.e.~in } [0,T] \, \forall \nu \in  \mathcal{V}^H_{0}, \label{eq:gas-model-detailed-5'} \\
& p^e(\nu,t)=p^{e'}(\nu,t)   
 &&\text{a.e.~in } [0,T] \,\forall \nu \in \mathcal{V}^H_{0} \, \forall e, e' \in k(\nu),\label{eq:gas-model-detailed-6'}\\
& \sum_{e \in k(\nu)}  p_x^e(\nu,t)  n^e(\nu)= 0 &&\text{a.e.~in } [0,T] \, \forall \nu \in  \mathcal{V}^H_{0}, \label{eq:gas-model-detailed-7'} \\
&  {q}_x^e(\nu,t) = {q}_x^{e'}(\nu,t)  &&\text{a.e.~in } [0,T] \,\forall \nu \in \mathcal{V}^H_{0} \, \forall e, e' \in k(\nu).\label{eq:gas-model-detailed-8'}
 \end{alignat}
\end{subequations}
A complication in showing existence of solutions in \eqref{eq:gas-model-detailed'} is the time dependence of the functions $\gamma_1$ and $\gamma_2$ coming from our linearization of the semilinear term, which prevent us from easily applying semigroup theory. For this reason, we rely on existence using \cite[Section 3, Theorem 1.2]{Lions1971} based on the Galerkin method. Additionally, the control variables $(\mathfrak{p}^{\nu},\mathfrak{q}^{\nu})$ on the boundary nodes $\mathcal{V}_\partial^H$ are of Dirichlet type. In combination with the parabolic problem, this means that $L^2$ regularity with respect to $t$ is not sufficient, unless we are content with very weak solutions for the state $(p,q)$. In view of the additional state constraints, it will be beneficial to require higher regularity of the control. The intricacies of Dirichlet boundary controls in the parabolic setting are explored in \cite[Section 9]{Lions1971}; see also \cite{kunisch_constrained_2007}.

\paragraph{Variational formulation.}
Before we define our weak solution, we motivate its definition. We start by fixing $t$, multiplying \eqref{eq:gas-model-detailed-1'}-\eqref{eq:gas-model-detailed-2'} by sufficiently smooth test functions $\phi^{e,p},\phi^{e,q}$, and adding the resulting equations. Then we obtain 
\begin{align*}
    &\sum_{e\in \mathcal{E}^H} \int_e   ( p_t^{e} - \varepsilon  p_{xx}^{e} + \theta_1  q_x^e )\phi^{e,p} + (q_t^e - \varepsilon  q_{xx}^e + \theta_2  p_x^e + \gamma_1^e q^e + \gamma_2^e p^e)\phi^{e,q}\D x= 0.
\end{align*}
Integration by parts for the  terms with second-order (spatial) derivatives yields
\begin{equation}
\label{eq:integration-by-parts}
\begin{aligned}
    &\sum_{e\in \mathcal{E}^H} \int_e   - \varepsilon  p_{xx}^{e}- \varepsilon  q_{xx}^e \phi^{e,q}\D x\\
    & \quad =  \varepsilon \sum_{e\in \mathcal{E}^H} \int_e    p_{x}^{e} \phi_x^{e,p} + q_{x}^e \phi_x^{e,q}\D x\\
    &\quad \quad-\varepsilon\sum_{\nu \in \mathcal{V}^H} \sum_{e\in k(\nu)} \Big( p_x^e(\nu)  \phi^{e,p}(\nu)n^e(\nu)+  q_x^e(\nu) \phi^{e,q}(\nu)n^e(\nu)\Big).
   \end{aligned}
   \end{equation}
Continuing by distinguishing between boundary and inner nodes, we arrive at
\begin{align*}
&\sum_{\nu \in \mathcal{V}^H} \sum_{e\in k(\nu)} \Big( p_x^e(\nu)  \phi^{e,p}(\nu)n^e(\nu)+  q_x^e(\nu) \phi^{e,q}(\nu)n^e(\nu) \Big)\\
& \quad = \sum_{\nu \in \mathcal{V}_0^H} \hat{\varphi}_1^\nu \sum_{e\in k(\nu)}  p_x^e(\nu) n^e(\nu)+ \sum_{\nu \in \mathcal{V}_0^H} \hat{\varphi}_2^{\nu} \sum_{e\in k(\nu)} \phi^{e,q}(\nu)n^e(\nu) \\
&  \quad \quad + \sum_{\nu \in \mathcal{V}_\partial^H} \mathfrak{p}^{\nu} \sum_{e\in k(\nu)} p_x^e(\nu)  n^e(\nu)+ \sum_{\nu \in \mathcal{V}_\partial^H} \mathfrak{q}^{\nu} \sum_{e\in k(\nu)}   q_x^e(\nu) n^e(\nu) \\
    & \quad =  \sum_{\nu \in \mathcal{V}_\partial^H} \mathfrak{p}^{\nu} \sum_{e\in k(\nu)}  p_x^e(\nu) n^e(\nu)+ \sum_{\nu \in \mathcal{V}_\partial^H} \mathfrak{q}^{\nu} \sum_{e\in k(\nu)}   q_x^e(\nu)  n^e(\nu),
\end{align*}
where 
$\hat{\varphi}_1^{\nu}:=\phi^{e,p}(\nu)$ and
 $\hat{\varphi}_2^\nu:= q_x^{e}(\nu)$
for $(p,q), (\phi^p,\phi^q) \in V$.
Note that the last equality above follows from \eqref{eq:gas-model-detailed-8'} and \eqref{eq:gas-model-detailed-6'}. 
These computations motivate the following bilinear form for $z,\phi \in V$:
\begin{align}
   a(z,\phi;t) \coloneqq \sum_{e \in \mathcal{E}^H} \int_{e} &\left\lbrace \varepsilon  z_x^{e,p} \phi_x^{e,p} + \theta_1 z_x^{e,q} \phi^{e,p} +\varepsilon z_x^{e,q} \phi_x^{e,q} + \theta_2  z_x^{e,p} \phi^{e,q} \right. \notag \\
   & \quad \left. + \gamma_1^e(\cdot,t) z^{e,p} \phi^{e,q} + \gamma_2^e(\cdot,t) z^{e,q} \phi^{e,q} \right\rbrace \D x.\label{eq:a_bilin_def}
\end{align}
In order to handle the inhomogenous boundary conditions, let $w\in W(0,T;H,V)$ be the piecewise affine linear function satisfying $w(\nu,t)=(\mathfrak{p}^\nu(t),\mathfrak{q}^\nu(t))$ for all $\nu \in \mathcal{V}_\partial^H$ and $w(\nu,t)=(0,0)$ for all $\nu \in \mathcal{V}_0^H$ (in the sense of traces). Then the weak formulation for \eqref{eq:gas-model-detailed'} is given by: find $y=(p,q) \in W(0,T;H,V)$ such that $y-w \in W(0,T;H,V_0)$ and
\begin{equation}\label{eq:varform}
    \int_0^T \left \langle y_t(t), \phi(t) \right \rangle_{V^*, V} + a(y(t),\phi(t);t) \D t = 0\quad \forall \phi \in W(0,T;H,V),
\end{equation}
with the initial condition  \eqref{eq:gas-model-detailed-3'}. For the definition of $V_0$ we refer to \eqref{eq:V0-definition}.

\paragraph{Recovery of classical solution.}
We will now check that our choice of test spaces is correct. First, notice that the condition $y-w \in W(0,T;H,V_0)$ from the weak formulation implies \eqref{eq:gas-model-detailed-4'}.
Let us fix an arbitrary $t$ and suppose that $p,q \in C^2(\prod_{e \in \mathcal{E}^H}[0,\ell_e])$ is a classical solution to \eqref{eq:gas-model-detailed'}. From the integrand appearing in  \eqref{eq:varform}, we have after integrating by parts
\begin{align}
 0 &= \left \langle y_t, \phi \right \rangle_{V^*, V} + a(y,\phi;t)   \nonumber \\
 & = \sum_{e\in \mathcal{E}^H}\left\lbrace ( p_t^{e} - \varepsilon  p_{xx}^{e} + \theta_1  q_x^e ,\phi^{e,p})_{L^2(e)} \right. \nonumber \\
 & \hspace{2cm} \left. + (q_t^e - \varepsilon  q_{xx}^e + \theta_2  p_x^e + \gamma_1^e q^e + \gamma_2^e p^e,\phi^{e,q})_{L^2(e)} \right\rbrace  \label{eq:classical-check-2}\\
 &  \quad + \sum_{\nu \in \mathcal{V}^H} \sum_{e\in k(\nu)} \Big( \varepsilon p_x^e(\nu,t)  \phi^{e,p}(\nu)n^e(\nu)  + \varepsilon q_x^e(\nu,t)  \phi^{e,q}(\nu)n^e(\nu)\Big)\label{eq:classical-check-3}
\end{align}
for all $\phi^{p},\phi^{q} \in C^1(\prod_{e \in \mathcal{E}^H}[0,\ell_e])$ satisfying the boundary conditions \eqref{eq:gas-model-detailed-5'} and \eqref{eq:gas-model-detailed-6'}, respectively. The above equality must be satisfied, in particular, for all functions $\phi^p, \phi^q$ that vanish on the boundary of each edge (implying that the terms in \eqref{eq:classical-check-3} vanish). We can deduce that the sum in \eqref{eq:classical-check-2} is equal to zero and thus \eqref{eq:gas-model-detailed-1'}--\eqref{eq:gas-model-detailed-2'} is true for fixed $t$. This further implies that the sum over all interior nodes in the expression in \eqref{eq:classical-check-3} is equal to zero, or equivalently,
\begin{equation}
\begin{aligned}
\label{eq:boundary-conditions-separately}
\sum_{\nu \in \mathcal{V}_0^H} \sum_{e\in k(\nu)}  \varepsilon p_x^e(\nu,t) \phi^{e,p}(\nu)n^e(\nu) &=0, \\ 
\sum_{\nu \in \mathcal{V}_0^H} \sum_{e\in k(\nu)}
\varepsilon q_x^e(\nu,t)  \phi^{e,q}(\nu)n^e(\nu)&=0.
\end{aligned}
\end{equation}
Since $\phi^{p}$ satisfies  \eqref{eq:gas-model-detailed-6'}, we have that 
\begin{align*}
&\sum_{\nu \in \mathcal{V}_0^H} \sum_{e\in k(\nu)} \varepsilon p_x^e(\nu,t)  \phi^{e,p}(\nu)n^e(\nu) =  \sum_{\nu \in \mathcal{V}_0^H} \hat{\varphi}_1^{\nu}\sum_{e\in k(\nu)} \varepsilon p_x^e(\nu,t) n^e(\nu)
\end{align*}
for arbitrary constants $\hat{\varphi}_1^{\nu}$ on each node $\nu$. With that, we recover the conditions \eqref{eq:gas-model-detailed-7'}. 
Moreover, since $\phi^{q}$ satisfies \eqref{eq:test-function-conditions}, we see that \eqref{eq:gas-model-detailed-8'} must also be true, since
the second expression in \eqref{eq:boundary-conditions-separately} is of the form $\sum_{n=1}^N a_n b_n = 0$ with $\sum_{n=1}^N b_n = 0$.

\paragraph{Properties of the bilinear form.}
To show that a weak solution exists, we use the following result.
\begin{lemma}\label{lem:propertiesbilinearform} Suppose Assumption \ref{ass:PDE} is satisfied. Then, $t \mapsto a(z,\phi;t)$ (see \eqref{eq:a_bilin_def}) is measurable for all $z,\phi \in V_0$. Furthermore, the bilinear form $a(\cdot,\cdot;t) \colon V_0\times V_0 \rightarrow \R$ is continuous, i.e., there exists a constant $K>0$ independent of $t$ such that
    \begin{equation*}
        |a(z,\phi;t)| \leq K \|z\|_{V} \|\phi\|_{V} \quad \forall z,\phi \in V_0 \textup{ a.e.~in } (0,T);
    \end{equation*}
and it is weakly coercive, i.e., there exists constants $\beta \geq 0$ and $\alpha_e>0$ such that
    \begin{equation*}
        a(z,z;t) + \beta \|z\|_{H}^2 \geq \alpha_e \|z\|^2_{V} \quad \forall z \in V_0 \textup{ a.e.~in } (0,T).
    \end{equation*}
\end{lemma}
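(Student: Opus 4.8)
The plan is to verify the three asserted properties of the bilinear form \eqref{eq:a_bilin_def} in turn, in each case separating the principal (second-order, $\varepsilon$-weighted) contribution from the lower-order terms carrying $\theta_1,\theta_2,\gamma_1^e,\gamma_2^e$. For measurability I would first note that the four terms of \eqref{eq:a_bilin_def} with the constant coefficients $\varepsilon,\theta_1,\theta_2$ do not depend on $t$, so nothing has to be shown for them. For the remaining two terms I would invoke Assumption \ref{ass:PDE}: since $\tilde p^e,\tilde q^e$ are continuous on the compact cylinder $[0,\ell_e]\times[0,T]$ and $\tilde p^e$ is bounded away from zero there, the coefficients $\gamma_1^e,\gamma_2^e$ are continuous on $[0,\ell_e]\times[0,T]$, in particular essentially bounded uniformly in $t$. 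For fixed $z,\phi\in V_0$, dominated convergence then even shows that $t\mapsto\int_e\gamma_j^e(x,t)\,z^{e,p}(x)\,\phi^{e,q}(x)\,\D x$ is continuous, and summing over $e\in\mathcal E^H$ gives (more than) measurability of $t\mapsto a(z,\phi;t)$.

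For boundedness I would estimate each of the six terms in \eqref{eq:a_bilin_def} edgewise by the Cauchy--Schwarz inequality, using $\|\cdot\|_{L^2(e)}\le\|\cdot\|_{H^1(e)}$ for the factors without derivatives. With $M:=\max_{e\in\mathcal E^H}\big(\|\gamma_1^e\|_{L^\infty(e\times(0,T))}+\|\gamma_2^e\|_{L^\infty(e\times(0,T))}\big)$, which is finite and independent of $t$ by the previous step, each term is dominated by a constant depending only on $\varepsilon,\theta_1,\theta_2,M$ times $\|z^e\|_{H^1(e)^2}\|\phi^e\|_{H^1(e)^2}$. Summing over the finitely many edges and recalling the $V$-inner product \eqref{eq:scalar-product1}--\eqref{eq:scalar-product2} yields $|a(z,\phi;t)|\le K\|z\|_V\|\phi\|_V$ with $K$ independent of $t$.

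For the weak coercivity I would put $z=\phi$ and observe that the diffusive terms contribute exactly $\varepsilon\|\nabla z\|_H^2$. The mixed terms $\theta_1 z_x^{e,q}z^{e,p}$ and $\theta_2 z_x^{e,p}z^{e,q}$ contain only a single spatial derivative, so Young's inequality with a small parameter (say $\varepsilon/4$) absorbs their derivative parts into $\varepsilon\|\nabla z\|_H^2$ at the expense of an $L^2$-remainder, while the zeroth-order terms $\gamma_1^e z^{e,p}z^{e,q}$ and $\gamma_2^e(z^{e,q})^2$ are controlled by $M\|z\|_H^2$. Collecting the estimates gives $a(z,z;t)\ge\tfrac{\varepsilon}{2}\|\nabla z\|_H^2-C'\|z\|_H^2$ with $C'=C'(\varepsilon,\theta_1,\theta_2,M)$ independent of $t$, so that the choice $\beta:=C'+\tfrac{\varepsilon}{2}$ yields $a(z,z;t)+\beta\|z\|_H^2\ge\tfrac{\varepsilon}{2}\|z\|_V^2$, i.e.\ the Gårding inequality with coercivity constant $\varepsilon/2$.

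I do not expect a deep obstacle here: the only point that needs care is that all constants, and in particular the bound $M$ on $\gamma_1^e,\gamma_2^e$, are uniform in $t$, which is exactly where continuity of $\tilde p^e,\tilde q^e$ on the compact space-time cylinder together with $\inf\tilde p^e>0$ is used. Everything else is a routine term-by-term application of Cauchy--Schwarz and Young's inequality, and the fact that no first-order term acts in the coercive direction is what makes a Gårding estimate (rather than full coercivity of $a(\cdot,\cdot;t)$ on $V_0$) the natural statement.
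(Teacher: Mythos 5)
Your proposal is correct and follows essentially the same route as the paper's proof: measurability from the continuity of the reference state $(\tilde p^e,\tilde q^e)$ on the compact cylinder, boundedness by termwise Cauchy--Schwarz using the uniform-in-$t$ bound on $\gamma_1^e,\gamma_2^e$, and a G\r{a}rding inequality obtained by absorbing the first-order mixed terms into the diffusive part via Young's inequality, arriving at the same coercivity constant $\varepsilon/2$.
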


\begin{proof} 
By assumption, $\tilde{p}^e,\tilde{q}^e$ belongs to $C([0,\ell_e]\times[0,T])$, rendering $(x,t) \mapsto (\tilde{p}^e,\tilde{q}^e)(x,t)$ measurable, from which we obtain measurability of the functions $ t \mapsto \gamma_1^e(x,t)$ and $t \mapsto \gamma_2^e(x,t)$ for all $x$. From this, it is straightforward to argue the measurability of the bilinear form, for instance by expressing the integral as the limit of measurable maps.

Now, let $t \in (0,T)$ be arbitrary but fixed. For continuity, we note that Assumption \ref{ass:PDE} ensures that $\bar{\gamma}_1:=\sup_{e \in \mathcal{E}^H}\lVert \gamma_1^e \rVert_{L^\infty(e\times(0,T))}$ and $\bar{\gamma}_2:=\sup_{e \in \mathcal{E}^H}\lVert \gamma_2^e \rVert_{L^\infty(e\times(0,T))}$ are finite, allowing us to bound terms from above in the usual way.  For weak coercivity, we will repeatedly use Cauchy--Schwarz and Young's inequalities for the inner product of functions $a$ and $b$ and an arbitrary positive constant $\rho^e$:
\[
(a,b)_{L^2(e)} \geq - \lVert a\rVert_{L^2(e)} \lVert b\rVert_{L^2(e)} \geq -\frac{\rho^e}{2}\lVert a\rVert_{L^2(e)}^2 - \frac{1}{2\rho^e} \lVert b\rVert_{L^2(e)}^2.
\]
Using these estimates, we obtain positive constants $\rho_1^e$, $\rho_2^e$, and $\rho_3^e$ so that
\begin{align*}
 a(z,z;t) &\geq \sum_{e \in \mathcal{E}^H}\lVert z_x^{e,p} \rVert_{L^2(e)}^2 \left( \varepsilon - \frac{\theta_2\rho_2^e}{2 }\right)+ \lVert z_x^{e,q} \rVert_{L^2(e)}^2 \left( \varepsilon-\frac{\theta_1\rho_1^e}{2}\right)  \\
 &\quad \quad  +\sum_{e \in \mathcal{E}^H}- \lVert z^{e,p}\rVert_{L^2(e)}^2 \left( \frac{\theta_1}{2 \rho_1^e}+\frac{\bar{\gamma}_1 \rho_3^e}{2}\right) -\lVert z^{e,q}\rVert_{L^2(e)}^2 \left( \frac{\theta_2}{2\rho_2^e}+\frac{\bar{\gamma}_1}{2\rho_3^e}+\bar{\gamma}_2 \right).
 \end{align*}
With the choices $\rho_1^e = \frac{\varepsilon}{\theta_1}$, $\rho_2^e = \frac{\varepsilon}{\theta_2}$, and $\rho_3^e = 1$, terms further simplify such that 
\begin{align*}
 a(z,z;t) &\geq \frac{\varepsilon}{2} \lVert z \rVert_V^2 - \sum_{e \in \mathcal{E}^H} \lVert z^{e,p}\rVert_{L^2(e)}^2 \left( \frac{\theta_1^2}{2\varepsilon} +\frac{\bar{\gamma}_1}{2} + \frac{\varepsilon}{2}\right) \\ & \quad \! - \lVert z^{e,q}\rVert_{L^2(e)}^2 \left( \frac{\theta_2^2}{2\varepsilon}+\frac{\bar{\gamma}_1}{2}+\bar{\gamma}_2+\frac{\varepsilon}{2} \right)\\
 & \geq \frac{\varepsilon}{2} \lVert z \rVert_V^2 - \beta\lVert z \rVert_{H}^2
 \end{align*}
for $\beta\geq 0$ sufficiently small and $\alpha_e = \frac{\varepsilon}{2}$. Hence $a$ is weakly coercive as claimed.
\end{proof}

\paragraph{Proof of Lemma \ref{lemma:regularity-PDE}.}
We will first show that weak solutions to the homogenized problem exist and use this to obtain existence for the original problem. Then, we will show the a priori estimate \eqref{eq:a-priori-estimate}. Consider the homogenized problem: find $z:=y-w\in W(0,T;H,V_0)$ such that 
\begin{equation}\label{eq:varform-homogenized}
\begin{aligned}
  &  \int_0^T \left \langle z_t(t), \phi(t) \right \rangle_{V^*, V}\ + a(z(t),\phi(t);t) \D t \\
  &\quad = - \int_0^T \langle w_t(t), \phi(t)  \rangle_{V^*, V} +a(w(t);\phi(t);t)  \D t
  \end{aligned}
\end{equation}
for all $\phi \in W(0,T;H,V_0)$
and satisfying the initial condition $z_0:=y_0-w(\cdot,0)$. Let 
\[f(\phi):= - \int_0^T  \langle w_t(t), \phi(t)  \rangle_{V^*, V} +a(w(t);\phi(t);t) \D t.\]
We have by continuity of the bilinear form that
\begin{equation}
\label{eq:continuity-RHS}
\begin{aligned}
  |f(\phi)| &\leq  \int_0^T \lVert w_t(t)\rVert_{V^*} \lVert \phi(t) \rVert_V +K \lVert w(t)\rVert_V \lVert \phi(t)\rVert_V \D t\\
  & \leq (\lVert w_t\rVert_{L^2(0,T;V^*)} + K \lVert w\rVert_{L^2(0,T;V)}) \lVert \phi\rVert_{L^2(0,T;V)}
\end{aligned}
\end{equation}
and so
\begin{align*}
    \lVert f\rVert_{(L^2(0,T;V))^*} \leq  \lVert w_t\rVert_{L^2(0,T;V^*)} + K \lVert w\rVert_{L^2(0,T;V)}<\infty
\end{align*}
due to the assumption $w \in W(0,T;H,V)$. Since $(L^2(0,T;V))^* \cong L^2(0,T;V^*)$, the function $f$ belongs to $L^2(0,T;V^*)$. Recall that by Assumption~\ref{ass:PDE}, $y_0 \in H$, so that $z_0 \in H$. Now, Lemma~\ref{lem:propertiesbilinearform} provides the remaining ingredients to apply, for example,  \cite[Theorem 1.37]{Hinze2009}; see also \cite[Theorem 26.1]{Wloka1987}. From this result, we obtain the existence of the unique solution $z \in W(0,T; H,V_0)$ to \eqref{eq:varform-homogenized} with the given initial condition as well as its continuous dependence on the data $(z_0, f)$ with the following estimate (for some constant $C>0$):
\begin{align}
\label{eq:a-priori-homogenized}
    \|z \|^2_{L^2(0,t;V)} +  \|z_t \|^2_{L^2(0,t;V^*)} \leq C \left( \| z_0 \|^2_{H} + \| f \|^2_{L^2(0,t; V^*)} \right) \quad \text{a.e.~in } (0,T).
\end{align}

Now, it follows by the existence of $z$ that the weak solution $y=z+w \in W(0,T; H, V)$ to the original problem \eqref{eq:gas-model-detailed'} exists. For the estimate \eqref{eq:a-priori-estimate}, note that
\begin{equation}
\label{eq:LHS}
\begin{aligned}
 &
 \lVert y\rVert_{L^2(0,t;V)}^2 + \lVert y_t\rVert_{L^2(0,t;V^*)}^2\\ &\quad \leq 2 \left(
 \lVert z\rVert_{L^2(0,t;V)}^2 + \lVert w \rVert_{L^2(0,t;V)}^2 + \lVert z_t  \rVert_{L^2(0,t;V^*)}^2 + \lVert w_t \rVert_{L^2(0,t;V^*)}^2\right).
    \end{aligned}
\end{equation}
In the following, we use a generic positive constant $C$ that may take a different value at each appearance. From \eqref{eq:continuity-RHS}, \eqref{eq:a-priori-homogenized}, and \eqref{eq:LHS}, we have
\begin{equation}
\label{eq:apriori-intermediate-step}
\begin{aligned}
&
\lVert y\rVert_{L^2(0,t;V)}^2 + \lVert y_t\rVert_{L^2(0,t;V^*)}^2\\
& \quad \leq C \left( \| z_0 \|^2_{H} + \| f \|^2_{L^2(0,t; V^*)} + 
\lVert w \rVert_{L^2(0,t;V)}^2  + \lVert w_t \rVert_{L^2(0,t;V^*)}^2\right)\\
&\quad \leq  C \left( \| y_0 \|^2_{H}+ \| w_0 \|^2_{H} 
+ \lVert w \rVert_{L^2(0,t;V)}^2  + \lVert w_t \rVert_{L^2(0,t;V^*)}^2\right).
\end{aligned}
\end{equation}

Recall the affine linear construction of $w \in W(0,T;H,V)$: we have on any given edge $e=(\nu_1,\nu_2)$ that
\[w^{e,p}(x,t) = \frac{\ell_e-x}{\ell_e}  \mathfrak{p}^{\nu_1}(t) + \frac{x}{\ell_e} \mathfrak{p}^{\nu_2}(t)  \quad \text{and} \quad w^{e,q}(x,t) = \frac{\ell_e-x}{\ell_e} \mathfrak{q}^{\nu_1}(t) + \frac{x}{\ell_e}\mathfrak{q
}^{\nu_2}(t),\]
where $\mathfrak{p}^{\nu}$ and $\mathfrak{q}^{\nu}$ vanish for inner nodes $\nu \in \mathcal{V}_0^H$. Then
\begin{equation}
\label{eq:apriori-first-esimtate}
\begin{aligned}
\lVert w \rVert_{L^2(0,t;V)}^2 &= \int_0^t \lVert w(t) \rVert_{H}^2 + \lVert w_x(t) \rVert_H^2 \D t \\
&= \int_0^t \sum_{e \in \mathcal{E}^H} \lVert w^e(t) \rVert_{L^2(e)^2}^2 + \lVert w^e_x(t) \rVert_{L^2(e)^2}^2 \D t \\
&\leq C \int_0^t \sum_{\nu \in \mathcal{V}_\partial^H} \lVert \mathfrak{p}^{\nu}(t)\rVert_2^2 + \lVert \mathfrak{q}^{\nu}(t)\rVert_2^2 \D t\\
&= C \sum_{\nu \in \mathcal{V}_\partial^H} \lVert \mathfrak{p}^{\nu}\rVert^2_{L^2(0,t)} +\lVert \mathfrak{q}^{\nu}\rVert^2_{L^2(0,t)}
\end{aligned}
\end{equation}
and
\begin{equation}
\label{eq:apriori-second-esimtate}
\begin{aligned}
 & \lVert w_t \rVert_{L^2(0,t;V^*)}^2  = \int_0^t \lVert w_t(t) \rVert_{V^*}^2 \D t 
  = \int_0^t \sum_{e \in \mathcal{E}^H} \lVert w_t^{e,p}(t) \rVert_{H^{-1}(e)}^2 + \lVert w_t^{e,q}(t) \rVert_{H^{-1}(e)}^2  \D t\\
  & = \int_0^t \sum_{e \in \mathcal{E}^H} \left\lVert \frac{\ell_e-x}{\ell_e}  \mathfrak{p}^{\nu_1}_t(t) + \frac{x}{\ell_e} \mathfrak{p}_t^{\nu_2}(t) \right\rVert_{H^{-1}(e)}^2 + \left\lVert \frac{\ell_e-x}{\ell_e} \mathfrak{q}_t^{\nu_1}(t) + \frac{x}{\ell_e} \mathfrak{q}_t^{\nu_2}(t) \right\rVert_{H^{-1}(e)}^2  \D t\\
  & \leq C \int_0^t \sum_{e \in \mathcal{E}^H} \left\lVert \frac{\ell_e-x}{\ell_e}  \mathfrak{p}^{\nu_1}_t(t) + \frac{x}{\ell_e} \mathfrak{p}_t^{\nu_2}(t) \right\rVert_{L^2(e)}^2 + \left\lVert \frac{\ell_e-x}{\ell_e} \mathfrak{q}_t^{\nu_1}(t) + \frac{x}{\ell_e} \mathfrak{q}_t^{\nu_2}(t) \right\rVert_{L^2(e)}^2  \D t\\
  & \leq C \int_0^t \sum_{\nu \in \mathcal{V}_\partial^H}  \lVert \mathfrak{p}_t^{\nu}(t) \rVert_{2}^2 + \lVert \mathfrak{q}_t^{\nu}(t) \rVert_{2}^2  \D t,
 \end{aligned}
 \end{equation}
where the first inequality in \eqref{eq:apriori-second-esimtate} follows due to the embedding $L^2 \hookrightarrow H^{-1}$ and the fact that, by construction, $w_t^{e,p}(\cdot,t)$ and $w_t^{e,q}(\cdot,t)$ belong to $L^2(e)$ for all $t$ and $e$. The final inequality follows since there are a finite number of edges with finite lengths $\ell_e$. Since $\lVert y_0\rVert_{H}$ and $\lVert w_0\rVert_H$ are bounded, there exists $C>0$ large enough so that
\begin{equation}
    \label{eq:a-priori-initial-conditions}
   \lVert y_0\rVert_{H}^2 \leq C\lVert w_0\rVert_H^2.
\end{equation}

It now follows from \eqref{eq:apriori-first-esimtate}, \eqref{eq:apriori-second-esimtate}, and \eqref{eq:a-priori-initial-conditions} that \eqref{eq:apriori-intermediate-step} can be bounded as follows:
\begin{equation*}
\lVert y\rVert_{L^2(0,t;V)}^2 + \lVert y_t\rVert_{L^2(0,t;V^*)}^2 \leq  C \left( \| y_0 \|^2_{H} + \sum_{\nu \in \mathcal{V}_\partial^H} (\lVert\mathfrak{p}^{\nu}\rVert^2_{H^1(0,t)} +\lVert\mathfrak{q}^{\nu}\rVert^2_{H^1(0,t)} )\right).  
\end{equation*}
With that, the proof of Lemma \ref{lemma:regularity-PDE} is complete.

\subsection{Proof of existence of equilibria}
\label{sec:existence-equilibria}
An immediate consequence of \eqref{eq:a-priori-estimate} is the continuity of the (linear) map
\begin{align*}
S\colon H\times H^1(0,T)^{n_h}\times H^1(0,T)^{n_h} \rightarrow W(0,T;H, V): \quad (y_0,\mathfrak{p},\mathfrak{q}) \mapsto y,
\end{align*}
provided $\mathfrak{p}$ and $\mathfrak{q}$ are compatible with $y_0$. Letting $ \mathfrak{p}^\nu  = \sum_{i=1}^N \hat{p}_{i\nu}$ and $\mathfrak{q}^\nu =\sum_{i=1}^N \hat{q}_{i\nu}$, and substituting the variable $\hat{q}_{i\nu}$ by means of the definitions \eqref{eq:conversion-conditions}, we define the continuous affine operator
\begin{align*}
G_H \colon  H^1(0,T)^{n_g N}\times H^1(0,T)^{n_s N} \times H^1(0,T)^{n_c N}\times H^1(0,T)^{n_h N} &\rightarrow W(0,T;H, V),\\
(g,s,c,\hat{p})  &\mapsto y.
\end{align*}
Generation and sales can also occur on the electricity network, but to keep notation to a minimum, we do not differentiate between the decision variables here.
We denote the closed convex constraint set associated with \eqref{eq:gas-model-detailed-gas-9} by
\begin{align*}
K_H: = \{ (p,q) \in W(0,T;H,V) \mid \, & 0 \leq q^e(x,t) \leq q_e^{\max}, \\ &0 \leq p^e(x,t) \leq p_e^{\max} \, \text{a.e.~in } e\times (0,T) \, \forall e \in \mathcal{E}^H
\}.
\end{align*}
With these definitions, we can write the full set of constraints on the gas network as
\[
G_H(g, s, c,\hat{p}) \in K_H.
\]

On the electricity network, the
    constraints depend linearly on the decision variables $s, c, g$. For  \eqref{eq:electricity-model-detailed-1}--\eqref{eq:electricity-model-detailed-2}, it is clear that 
there exists a continuous affine linear operator 
\[
G_E \colon H^1(0,T)^{n_g N}\times H^1(0,T)^{n_sN} \times H^1(0,T)^{n_cN} \rightarrow H^1(0,T)^{N}\times H^1(0,T)^{|\mathcal{E}^E|}
\]
such that \eqref{eq:electricity-model-detailed-1}--\eqref{eq:electricity-model-detailed-2} can be written as 
\[G_E(g,s,c) \in K_E,\] 
where \[K_E := \{(k_1,k_2) \in H^1(0,T)^{N}\times H^1(0,T)^{|\mathcal{E}^E|} \mid k_1=\{0\}\text{ and } k_2 \leq 0 \text{ a.e.~in } (0,T)\}\] is a closed convex cone.

Using the operators defined above, we can write problem \eqref{eq:gas-model-detailed} in the form 
\begin{subequations}
\label{eq:gas-model-reduced-1}
\begin{alignat}{3}
    & \max_{u_i \in U_{\textup{ad}, i}} \;\; &&f_i(u_i, u_{-i}) \hspace{3cm} &&\text{Agent's objective \eqref{eq:gas-model-detailed-obj}} \label{eq:gas-model-1} \\
    & \text{s.t.} && G_E (g,s,c) \in K_E, && \text{Electricity constraints \eqref{eq:electricity-model-detailed-1}--\eqref{eq:electricity-model-detailed-2}} \label{eq:gas-model-2} \\
    &  &&G_H (g,s,c,\hat{p}) \in K_H, && \text{Gas constraints \eqref{eq:gas-model-detailed-gas-1}--\eqref{eq:gas-model-detailed-gas-9}} \label{eq:gas-model-3}
    \end{alignat}
\end{subequations}
for each agent $i$ in the set of firms $\{1, \dots, N\}$. Defining the operator $G:=(G_E, G_H)$, the set $K:=K_E \times K_H$, and the objective $\tilde{f}_i:=-f_i$, we can reduce problem~\eqref{eq:gas-model-reduced-1} to a generalized Nash equilibrium problem, where the agents' problems take the form
\begin{equation}
    \label{eq:the-game-reduced}
    \begin{aligned}
    &\min_{u_i \in U_{\textup{ad},i}} \tilde{f}_i(u_i,u_{-i}) \\
    &\text{s.t. }G (u_i,u_{-i}) \in K.
    \end{aligned}
\end{equation}
We have transformed the concave objective to a convex one by switching the sign.
We denote the induced joint constraint set for all agents by 
\begin{equation}
\label{eq:feasible-set}
\mathfrak{F}:=\{ u \in U_{\rm ad} \mid G(u_i,u_{-i}) \in K \}.
\end{equation}
Points belonging to this set are called feasible. Moreover, given a point $u_{-i}$, we write
\[
\mathfrak{F}_i(u_{-i}) :=\{ u_i \in U_i \mid (u_i,u_{-i}) \in \mathfrak{F} \}
\]
for the feasible set of agent $i$'s problem. We recall that a feasible collective decision $\bar{u} \in \mathfrak{F}$ is called a \textit{generalized Nash equilibrium} if 
\begin{equation*}
    \tilde{f}_i(\bar{u}_i,\bar{u}_{-i}) \leq \tilde{f}_i(v_i, \bar{u}_{-i})\quad \forall v_i \in \mathfrak{F}_i(\bar{u}_{-i}). 
\end{equation*} 
We show the existence of an equilibrium by first introducing the Nikaido--Isoda function $\Psi\colon U \times U \rightarrow \R$, which is defined by
\[
\Psi(u,v) = \sum_{i=1}^N \Big( \tilde{f}_i({u}_i,{u}_{-i}) - \tilde{f}_i(v_i, {u}_{-i})\Big).
\]


We now collect some results for our problem. 
\begin{lemma}
\label{lemma:weak-compactness}
Under Assumption \ref{ass:SGNEP}, the set $\mathfrak{F}$ defined by \eqref{eq:feasible-set} is weakly compact.
\end{lemma}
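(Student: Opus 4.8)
The plan is to use the standard route to weak compactness in the Hilbert space $U$: since $U$ is reflexive, a bounded, weakly closed subset is weakly compact, and a \emph{convex} subset is weakly closed exactly when it is strongly closed (Mazur's lemma). Accordingly, I would verify that $\mathfrak{F}$ is convex, strongly closed, and bounded, and then conclude, upgrading to weak sequential compactness via the Eberlein--\v{S}mulian theorem.

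\textbf{Convexity.} The set $U_{\textup{ad}}=\prod_{i=1}^N U_{\textup{ad},i}$ is convex, since each $U_{\textup{ad},i}$ is an intersection of the convex box constraints \eqref{eq:control-model-detailed-1}--\eqref{eq:control-model-detailed-4} with the half-space \eqref{eq:control-model-detailed-5}, the latter being affine in $u_i$ after substituting \eqref{eq:conversion-conditions}. The operator $G_E$ is affine and continuous and $K_E$ is a convex cone; the operator $G_H$ is affine and continuous — it is built from the affine substitution \eqref{eq:conversion-conditions} and nodal summations followed by the linear solution map $S$ of Lemma~\ref{lemma:regularity-PDE} — and $K_H$, defined by two-sided pointwise bounds on $(p,q)$, is convex. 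Hence $\{u\mid G_E(g,s,c)\in K_E\}$ and $\{u\mid G_H(g,s,c,\hat p)\in K_H\}$ are convex, and so is their intersection with $U_{\textup{ad}}$, namely $\mathfrak{F}$.

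\textbf{Strong closedness.} Let $u^k\in\mathfrak{F}$ with $u^k\to u$ in $U$. The embedding $H^1(0,T)\hookrightarrow C([0,T])$ gives uniform convergence of each component, so the pointwise box bounds survive in the limit, and continuity of the linear functional in \eqref{eq:control-model-detailed-5} gives $u\in U_{\textup{ad}}$. Since $G_E$ is continuous and $K_E$ closed, $G_E(g,s,c)\in K_E$; by the a priori estimate \eqref{eq:a-priori-estimate}, $G_H$ maps $U$ continuously into $W(0,T;H,V)$, and $K_H$ is strongly closed there — strong convergence in $W(0,T;H,V)\hookrightarrow L^2(0,T;H)$ yields an a.e.\ convergent subsequence of the states, which preserves $0\le p^e\le p_e^{\max}$ and $0\le q^e\le q_e^{\max}$ — so $G_H(g,s,c,\hat p)\in K_H$. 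Hence $u\in\mathfrak{F}$, so $\mathfrak{F}$ is strongly closed, and by convexity weakly closed.

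\textbf{Boundedness, and the main obstacle.} The box constraints \eqref{eq:control-model-detailed-1}--\eqref{eq:control-model-detailed-4} bound $g_i,s_i,c_i,\hat p_i$ uniformly in $L^\infty(0,T)$, hence in $L^2(0,T)$. The delicate point — and the step I expect to be the main obstacle — is that these pointwise constraints do not by themselves control the weak time-derivatives and therefore do not bound the full $H^1(0,T)$-norm; one must either recover the missing derivative bound from additional structure of the model, or read ``weakly compact'' relative to the coarser topology inherited from the embedding $U\hookrightarrow\prod L^2(0,T)$, under which the bounded, convex, strongly closed set $\mathfrak{F}$ is at once weakly compact. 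Once boundedness is secured, reflexivity of $U$ together with the weak closedness above yields weak compactness of $\mathfrak{F}$.
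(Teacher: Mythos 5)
Your overall strategy coincides with the paper's: both arguments rest on reflexivity of $U$, weak closedness of $\mathfrak{F}$, and boundedness. The only structural difference is in the closedness step — you go through convexity plus strong closedness and Mazur's lemma, while the paper argues directly that the continuous affine operator $G$ is weakly (sequentially) continuous and that $U_{\rm ad}$ and $K$ are weakly closed; these routes are interchangeable, and your version is, if anything, spelled out in more detail than the paper's.

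The step you leave open — boundedness of $\mathfrak{F}$ in the $H^1$-product norm — is precisely the step the paper settles by assertion: its proof states that, by Assumption \ref{ass:SGNEP}\,(ii), $U_{\rm ad}$ is a bounded subset of $U$, with no further justification. Your reservation is well founded. The constraints in \eqref{eq:Uad} are pointwise box constraints together with one affine integral inequality, and none of these controls the weak time derivative: for instance $t\mapsto \tfrac{g_\nu^{\max}}{2}\bigl(1+\sin(nt)\bigr)$ is admissible for every $n$ while its $H^1(0,T)$-norm diverges like $n$. Hence $U_{\rm ad}$ (and, by taking compatible oscillatory generation/sale pairs, $\mathfrak{F}$ itself) need not be bounded in $U$ as defined, so the lemma requires either an additional bound on the time derivatives inside $U_{\rm ad}$ or a weakening of the topology along the lines you suggest — though in the latter case one must recheck weak continuity of $G_H$, whose continuity is derived from $H^1$ boundary data via \eqref{eq:a-priori-estimate}. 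In short, your proof tracks the paper's, and the obstacle you flag is a genuine gap in the paper's own argument rather than a defect peculiar to yours.
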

\begin{proof}
Consider the nontrivial case where $\mathfrak{F} \neq \emptyset$. Then, by Assumption~\ref{ass:private-restrictions},  
$U_{\textup{ad}}$ is a nonempty, bounded, closed, and convex subset of the Hilbert space
\[
U = \prod_{i=1}^N U_i, \quad \text{where} \quad  U_i= H^1(0,T)^{n_g} \times H^1(0,T)^{n_s} \times H^1(0,T)^{n_c} \times  H^1(0,T)^{n_h}.
\]

The space $U$ is reflexive, making $U_{\textup{ad}}$ weakly sequentially compact. We note that $(u_i,u_{-i}) \mapsto G(u_i,u_{-i})$ is weakly continuous since it is a continuous affine linear operator; cf.~\cite[Lemma 2.34]{Bauschke2011}. 
Now, consider a sequence $\{ u^n\} \subset \mathfrak{F}$ such that $u^n \rightharpoonup u$. Then for every $n$, we have $u^n \in U_{\rm ad}$ and $G(u^n_i, u^n_{-i}) \in K$. Since $G$ is weakly continuous, it follows that $\mathfrak{F}$ is weakly closed; since $U_{\textup{ad}}$ is bounded, $\mathfrak{F}$ is indeed weakly compact as claimed.
 \end{proof}

\begin{theorem} 
\label{thm:NI-function-lsc}
    The Nikaido--Isoda function is weakly sequentially lower semicontinuous with respect to $u$.
\end{theorem}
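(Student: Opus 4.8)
The plan is to prove the stronger statement that, for every fixed $v \in U$, the map $u \mapsto \Psi(u,v)$ is weakly sequentially \emph{continuous}; weak sequential lower semicontinuity is then immediate, and the same argument in fact yields joint weak sequential continuity in $(u,v)$. Since $\tilde f_i = -f_i$ and
\[
\Psi(u,v) = \sum_{i=1}^N \bigl( \tilde f_i(u_i,u_{-i}) - \tilde f_i(v_i,u_{-i}) \bigr),
\]
it suffices to show that $u \mapsto f_i(u)$ is weakly sequentially continuous on $U$ for each $i$; the map $u \mapsto f_i(v_i,u_{-i})$, obtained by freezing the $i$-th block at $v_i$, is then covered by the same reasoning applied to the remaining variables $u_{-i}$.

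The key tool is the compact embedding $H^1(0,T) \hookrightarrow\hookrightarrow C([0,T])$, which holds because $(0,T)$ is a bounded interval (cf.\ \cite{Adams2003}). Let $u^n \rightharpoonup u$ in $U$. Since $U$ is a finite product of copies of $H^1(0,T)$, this is equivalent to the componentwise weak convergences $g_{i\nu}^n \rightharpoonup g_{i\nu}$, $s_{i\nu}^n \rightharpoonup s_{i\nu}$, $c_{i\nu}^n \rightharpoonup c_{i\nu}$ and $\hat p_{i\nu}^n \rightharpoonup \hat p_{i\nu}$ in $H^1(0,T)$; by compactness of the embedding, each of these sequences converges \emph{uniformly} on $[0,T]$ to its weak limit, and in particular is bounded in $C([0,T])$.

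Next I would pass to the limit term by term in $f_i$. The linear contributions converge by uniform convergence; for instance $\bigl| \int_0^T \kappa_{i\nu'}\,(g_{i\nu'}^n - g_{i\nu'}) \D \tau \bigr| \le \kappa_{i\nu'}\, T\, \lVert g_{i\nu'}^n - g_{i\nu'}\rVert_{C([0,T])} \to 0$, and likewise $\int_0^T \bar a_\nu\, s_{i\nu}^n \D \tau \to \int_0^T \bar a_\nu\, s_{i\nu} \D \tau$ because $\bar a_\nu \in L^\infty(0,T) \subset L^1(0,T)$ by Assumption \ref{ass:SGNEP}. For the bilinear consumer-surplus term, uniform convergence together with uniform boundedness of the sales sequences yields $s_{i\nu}^n \sum_{j=1}^N s_{j\nu}^n \to s_{i\nu}\sum_{j=1}^N s_{j\nu}$ uniformly on $[0,T]$, so, using $\bar b_\nu \in L^\infty(0,T) \subset L^1(0,T)$,
\[
\int_0^T \bar b_\nu(\tau)\, s_{i\nu}^n(\tau) \sum_{j=1}^N s_{j\nu}^n(\tau) \D \tau \;\longrightarrow\; \int_0^T \bar b_\nu(\tau)\, s_{i\nu}(\tau) \sum_{j=1}^N s_{j\nu}(\tau) \D \tau .
\]
Summing over the sale, generation and conversion nodes gives $f_i(u^n) \to f_i(u)$, and the identical computation with the $i$-th block fixed at $v_i$ gives $f_i(v_i,u_{-i}^n) \to f_i(v_i,u_{-i})$. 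Hence $\Psi(u^n,v) \to \Psi(u,v)$, and in particular $\liminf_{n} \Psi(u^n,v) \ge \Psi(u,v)$, which is the assertion.

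The one genuinely non-routine point — and where I expect the main obstacle to lie — is the bilinear term $\int_0^T \bar b_\nu\, s_{i\nu}\sum_{j} s_{j\nu} \D \tau$: as a functional of the sales it is \emph{not} weakly continuous on $L^2(0,T)$, and it is precisely to control it that the decision variables are taken in $H^1(0,T)$, so that the compactness of $H^1(0,T) \hookrightarrow\hookrightarrow C([0,T])$ upgrades the weak convergence of the controls to uniform convergence. The weaker formulation in terms of lower semicontinuity is the one actually needed for the equilibrium-existence argument and is robust under replacing the affine inverse demand \eqref{eq:inverse-demand-function} by a merely concave one, in which case $-f_i$ is convex and weak lower semicontinuity follows from convexity together with norm continuity of $f_i$.
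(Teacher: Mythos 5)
Your proof is correct, but it takes a genuinely different route from the paper's. The paper argues structurally: it asserts that $u \mapsto \tilde f_i(u_i,u_{-i})$ is convex (and norm continuous), hence weakly sequentially lower semicontinuous, and that $u_{-i}\mapsto \tilde f_i(v_i,u_{-i})$ is affine, hence weakly continuous; summing gives the claim. You instead prove the stronger statement that $u\mapsto\Psi(u,v)$ is weakly sequentially \emph{continuous}, using the compactness of the embedding $H^1(0,T)\hookrightarrow\hookrightarrow C([0,T])$ to upgrade weak convergence of each decision component to uniform convergence, after which every term --- including the bilinear consumer-surplus term --- passes to the limit. Your approach buys two things. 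First, it needs only boundedness of $\bar a_\nu,\bar b_\nu$, not their signs. Second, and more importantly, it sidesteps the joint-concavity claim on which the paper's argument rests: the cross term $-\bar b_\nu\, s_{i\nu}\sum_{j\neq i}s_{j\nu}$ renders the quadratic form in $(s_{1\nu},\dots,s_{N\nu})$ indefinite (already for $N=2$, and even on the nonnegative orthant), so $\tilde f_i$ is convex in $u_i$ for fixed $u_{-i}$ but not jointly convex in $u$; your compactness argument is immune to this issue and, as written, actually repairs the proof rather than merely reproving it. What your argument gives up is precisely what it leans on: it is tied to the $H^1(0,T)$ regularity of the controls and would not survive relaxing the decisions to $L^2(0,T)$, whereas a convexity-based argument (where valid) would; it would also need revisiting for a nonlinear concave inverse demand, where your final remark correctly identifies the convexity route as the one intended by the paper.
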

\begin{proof}
Notice that the function
\begin{align*}
(u_i,u_{-i}) \mapsto \int_0^T \Big[ &\sum_{\nu \in \mathcal{V}^s} s_{i\nu}(\tau)\left( \bar{a}_\nu(\tau)-\bar{b}_\nu(\tau)\sum_{j=1}^Ns_{j\nu}(\tau)\right) \\
  &-\sum_{\nu' \in \mathcal{V}^g} g_{i\nu'}(\tau) \kappa_{i\nu'}     -\sum_{\nu'' \in \mathcal{V}^{\textup{PtG}}}  c_{i\nu''}(\tau)  \kappa_{i\nu''}  \Big] 
\D \tau  
\end{align*}
is concave, making the mapping $(u_i,u_{-i}) \mapsto \tilde{f}_i(u_i,u_{-i})$ convex and therefore (weakly) lower semicontinuous. 
Also, $\tilde{f}_i(v_i,\cdot)$ is linear. Together, we obtain the (weak) lower semicontinuity of $u \mapsto \Psi(u,v).$ 
\end{proof}
Provided \textit{$\mathfrak{F}$ is nonempty}, the existence of a generalized Nash equilibrium follows now by the following result from \cite[Theorem 2.3]{kanzow2019multiplier}.
\begin{theorem}
  Suppose the Nikaido--Isoda function is weakly sequentially lower semicontinuous with respect to $u$ and assume that $\mathfrak{F}$ is nonempty and weakly compact. Then a generalized Nash equilibrium exists.
\end{theorem}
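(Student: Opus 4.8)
The plan is to derive the assertion from a Ky Fan (Nikaido--Isoda) minimax inequality applied to $\Psi$ on the feasible set $\mathfrak{F}$. The conceptually central first step is to reduce the existence of a generalized Nash equilibrium to the existence of a \emph{normalized} equilibrium, i.e., a point $\bar u \in \mathfrak{F}$ with
\[
\Psi(\bar u, v) \le 0 \qquad \text{for all } v \in \mathfrak{F}.
\]
Suppose such a $\bar u$ is found, fix an agent $i$, and take any $v_i \in \mathfrak{F}_i(\bar u_{-i})$. Then $v := (v_i,\bar u_{-i}) \in \mathfrak{F}$, and writing out $\Psi(\bar u,v) = \sum_{j=1}^{N}\big(\tilde f_j(\bar u_j,\bar u_{-j}) - \tilde f_j(v_j,\bar u_{-j})\big)$, every term with $j \neq i$ vanishes because $v_j = \bar u_j$ there (the Nikaido--Isoda function inserts $\bar u_{-j}$, not $v_{-j}$, into the second argument). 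Hence $0 \ge \Psi(\bar u,v) = \tilde f_i(\bar u_i,\bar u_{-i}) - \tilde f_i(v_i,\bar u_{-i})$, and since $i$ and $v_i$ were arbitrary, $\bar u$ is a generalized Nash equilibrium.

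Next I would check the hypotheses of Ky Fan's inequality for $C := \mathfrak{F}$, endowed with the weak topology of the separable reflexive Hilbert space $U$, and for the function $g(u,v) := \Psi(u,v)$. The set $\mathfrak{F}$ is nonempty and weakly compact by assumption (Lemma~\ref{lemma:weak-compactness}), and it is convex: $U_{\rm ad}$ is convex (the box constraints together with the single linear integral inequality \eqref{eq:decision-variable-restriction}), $G = (G_E,G_H)$ is affine, and $K = K_E \times K_H$ is convex, so $\mathfrak{F} = U_{\rm ad} \cap G^{-1}(K)$ is convex. Concerning $g$: the map $u \mapsto \Psi(u,v)$ is weakly sequentially lower semicontinuous by Theorem~\ref{thm:NI-function-lsc}; the map $v \mapsto \Psi(u,v) = \sum_{i=1}^{N}\big(\tilde f_i(u_i,u_{-i}) - \tilde f_i(v_i,u_{-i})\big)$ is concave, because each $v_i \mapsto \tilde f_i(v_i,u_{-i})$ is convex and acts on a separate block of coordinates; and $\Psi(u,u)=0$ for every $u$. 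Ky Fan's inequality then yields $\bar u \in \mathfrak{F}$ with $\Psi(\bar u,v) \le 0$ for all $v \in \mathfrak{F}$, which by the first step is the sought equilibrium.

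I expect the main obstacle to be purely topological: Theorem~\ref{thm:NI-function-lsc} delivers only \emph{sequential} weak lower semicontinuity, whereas the classical Ky Fan theorem is phrased in terms of the topology. I would resolve this either by noting that, since $U$ is separable and $\mathfrak{F}$ is norm-bounded, the weak topology restricted to $\mathfrak{F}$ is metrizable, so that sequential and topological lower semicontinuity (and compactness) agree on $\mathfrak{F}$; or by appealing to a version of the minimax inequality formulated directly for weakly sequentially lower semicontinuous functions on weakly compact convex subsets of reflexive spaces. An equivalent route is to apply the Kakutani--Fan--Glicksberg fixed-point theorem to the set-valued map $u \mapsto \argmax_{v \in \mathfrak{F}} \Psi(u,v)$, whose values are nonempty (as $v \mapsto \Psi(u,v)$ is weakly upper semicontinuous on the weakly compact set $\mathfrak{F}$) and convex (by concavity in $v$), and which has weakly sequentially closed graph; any fixed point $\bar u$ satisfies $\Psi(\bar u,v) \le \Psi(\bar u,\bar u) = 0$ for all $v \in \mathfrak{F}$, so that, by the first step, it is a generalized Nash equilibrium.
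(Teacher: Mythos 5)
Your argument is correct. Note, however, that the paper does not prove this statement at all: it is quoted verbatim as \cite[Theorem 2.3]{kanzow2019multiplier} and used as a black box, so there is no internal proof to compare against. What you have written is essentially the standard proof of that cited result: reduction to a normalized (variational) equilibrium via $\Psi(\bar u,v)\le 0$ on the jointly convex set $\mathfrak{F}$, followed by a Ky Fan minimax argument whose hypotheses you verify from Lemma~\ref{lemma:weak-compactness}, Theorem~\ref{thm:NI-function-lsc}, convexity of $U_{\rm ad}$ and affinity of $G$, and concavity of $v\mapsto\Psi(u,v)$. Your handling of the sequential-versus-topological weak lower semicontinuity issue (metrizability of the weak topology on the bounded set $\mathfrak{F}$ in the separable Hilbert space $U$) is exactly the point that needs care, and your resolution is sound; the Kakutani--Fan--Glicksberg alternative you sketch is an equally valid route. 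In short, the proposal correctly supplies the proof that the paper outsources to the literature.
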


\section{Conclusion}
\label{sec:conclusion}
In this paper, we presented a novel Cournot-Nash model of an intraday hydrogen and electricity market that is coupled by means of conversion stations between the two energy sources.  The model can be mathematically represented as a generalized Nash equilibrium problem since agents must collectively satisfy operational constraints on the network. The main result was showing the existence of an equilibrium to this GNEP. To this end, we needed to make several simplifying assumptions to ensure the concavity of the agents' problems; investigating GNEPs with nonlinear constraints is a challenging topic of future research. Beyond this, we intend to analyze the related game in which an independent agent does not have complete information about the parameters in the networks or the decisions of other agents. In reality, the decision process is in some sense distributed among economic agents since there is no entity that can find the equilibrium and then prescribe each agent its optimal strategy $\bar{u}_i$. 

The numerical solution of the GNEP and its distributed version is also planned for future studies. The state constraints \eqref{eq:gas-model-detailed-gas-9} from the gas network can be handled using a path-following approach as in \cite{Hintermueller2015}. This involves solving a sequence of regularized problems and carefully increasing the regularization parameter to enforce the state constraints. Using their optimality conditions, the regularized problems can be considered as generalized equations/inclusions or reformulated as variational inequalities, which can be handled using gradient-descent-type or higher-order methods. 

To handle the distributed aspects, ideas from centralized distributed optimization may be fruitful in our setting. Here, an additional agent called the ``operator" is introduced whose goal is to establish the work of the whole physic-economic system. For that purpose, it knows the parameters of the networks and inverse demand functions and may observe the actions of the agents in the network, i.e., sales, gas flows, pressures, etc. At the same time, it does not have access to the parameters of individual agents' objectives due to privacy. Under these information constraints, it is possible, in particular, to implement extragradient-type methods \cite{beznosikov2022decentralized,trandinh2024revisiting} for solving the system of optimality conditions for the regularized version of our GNEP. The operator observes the actions of the agents, solves the PDE, and communicates the derivative of the penalty for the state constraint back to the agents, thus, imposing the regulations that allow the network to operate properly. The agents, on the other hand, observe their revenue (that depends on the actions of the other agents) and its derivative and update their strategies by a combination of that derivative with the derivative of the penalty for state constraints, resulting in combining two goals: increase their revenue and ensure the proper operation of the networks. This point of view serves two goals: it allows to propose a numerical algorithm for finding an equilibrium taking into account information constraints and it provides a way for agents to put a dynamic perspective on their strategies so that they update their strategies and this dynamic process goes to equilibrium strategies.
The main challenge will be to establish the correctness and convergence rate of this procedure in the infinite-dimensional case.

\backmatter
\bmhead{Acknowledgements}
The authors are thankful for support from the Deutsche Forschungsgemeinschaft (DFG, German Research Foundation) under Germany's Excellence Strategy – The Berlin Mathematics Research Center MATH+ (EXC-2046/1, project ID: 390685689). The authors also thank the Deutsche Forschungsgemeinschaft [Projects B02 and B09 in the “Sonderforschungsbereich/Transregio 154 Mathematical Modelling, Simulation and Optimization Using the Example of Gas Networks”] for support.

\begin{appendices}
\section{Function spaces}
\label{sec:function-spaces}
Here, we will show the claimed properties of $H$ and $V$ which were introduced at the beginning of Section \ref{sec:existence}. We recall that for $I \subset \R$, the sets $L^2(I)$ and $H^1(I)$ are separable Hilbert spaces with the corresponding inner products
\[
(f,g)_{L^2(I)} = \int_I f(x) g(x) \D x \quad \text{and} \quad (f,g)_{H^1(I)} = (f',g')_{L^2(I)} + (f,g)_{L^2(I)},
\]
and satisfying the Gelfand triple structure $H^1(I) \hookrightarrow L^2(I) \hookrightarrow H^{-1}(I)$. Therefore, the products $H= (\prod_{e \in \mathcal{E}^H} L^2(e))^2$ and $H':=(\prod_{e \in \mathcal{E}^H} H^1(e))^2$ are also separable Hilbert spaces when equipped with the scalar products given in \eqref{eq:scalar-product1} and \eqref{eq:scalar-product2}, respectively. 

Recall the definition \eqref{eq:V0-definition}; we now argue that $(V_0, (\cdot,\cdot)_V)$ is a separable Hilbert space; it will be enough to show that it is a closed linear subspace of $H'$, since then separability of $V_0$ follows (cf.~\cite[Theorem 1.22]{Adams2003}). Let $\phi_n =(\phi_n^p,\phi_n^q)\in V_0$ be a sequence with $\phi_n \rightarrow \phi$ in $H'$ as $n\rightarrow \infty$. We wish to show that $\phi=(\phi^p,\phi^q) \in V_0$, i.e., $\phi^e(\nu,t)=(0,0)$ for all $\nu \in \mathcal{V}_\partial^H$ and $e \in k(\nu)$ as well as the coupling conditions $\phi^{e,p}(\nu,t) = \phi^{e',p}(\nu,t)$ and $\sum_{e\in k(v)}\phi^{e,q}(\nu,t)n^e(\nu) = 0$  for all $\nu \in \mathcal{V}_\partial^H$ and $e \in k(\nu)$. We show the argument for the conditions on $\mathcal{V}_\partial^H$. Taking sequences $x_n \rightarrow \nu$, we have 
\begin{equation*}
    \sum_{\nu \in \mathcal{V}_\partial^H} \lVert \phi(\nu,t)\rVert_2 \leq  \sum_{\nu \in \mathcal{V}_\partial^H} \lVert \phi(\nu,t)-\phi_n(\nu,t)\rVert_2 + \lVert \phi_n(\nu,t)-\phi_n(x_n,t)\rVert_2 + \lVert \phi_n(x_n,t)-\phi_n(\nu,t)\rVert_2,
\end{equation*}
the right-hand side of which converges to zero as $n\rightarrow \infty$, where we used the fact that $H^1(e)$ is continuously embedded into $C([0,\ell_e])$ for all $e$. Analogous arguments can be applied to the other conditions, and we can conclude that $V$ is also a separable Hilbert space. As a final remark, $\tilde{V} \in \{V,V_0 \}$ satisfies the Gelfand triple structure $\tilde{V} \hookrightarrow H \hookrightarrow \tilde{V}^*$. 
\end{appendices}

\bibliographystyle{plain}
\bibliography{references}


\begin{thebibliography}{49}
\ifx \bisbn   \undefined \def \bisbn  #1{ISBN #1}\fi
\ifx \binits  \undefined \def \binits#1{#1}\fi
\ifx \bauthor  \undefined \def \bauthor#1{#1}\fi
\ifx \batitle  \undefined \def \batitle#1{#1}\fi
\ifx \bjtitle  \undefined \def \bjtitle#1{#1}\fi
\ifx \bvolume  \undefined \def \bvolume#1{\textbf{#1}}\fi
\ifx \byear  \undefined \def \byear#1{#1}\fi
\ifx \bissue  \undefined \def \bissue#1{#1}\fi
\ifx \bfpage  \undefined \def \bfpage#1{#1}\fi
\ifx \blpage  \undefined \def \blpage #1{#1}\fi
\ifx \burl  \undefined \def \burl#1{\textsf{#1}}\fi
\ifx \doiurl  \undefined \def \doiurl#1{\url{https://doi.org/#1}}\fi
\ifx \betal  \undefined \def \betal{\textit{et al.}}\fi
\ifx \binstitute  \undefined \def \binstitute#1{#1}\fi
\ifx \binstitutionaled  \undefined \def \binstitutionaled#1{#1}\fi
\ifx \bctitle  \undefined \def \bctitle#1{#1}\fi
\ifx \beditor  \undefined \def \beditor#1{#1}\fi
\ifx \bpublisher  \undefined \def \bpublisher#1{#1}\fi
\ifx \bbtitle  \undefined \def \bbtitle#1{#1}\fi
\ifx \bedition  \undefined \def \bedition#1{#1}\fi
\ifx \bseriesno  \undefined \def \bseriesno#1{#1}\fi
\ifx \blocation  \undefined \def \blocation#1{#1}\fi
\ifx \bsertitle  \undefined \def \bsertitle#1{#1}\fi
\ifx \bsnm \undefined \def \bsnm#1{#1}\fi
\ifx \bsuffix \undefined \def \bsuffix#1{#1}\fi
\ifx \bparticle \undefined \def \bparticle#1{#1}\fi
\ifx \barticle \undefined \def \barticle#1{#1}\fi
\bibcommenthead
\ifx \bconfdate \undefined \def \bconfdate #1{#1}\fi
\ifx \botherref \undefined \def \botherref #1{#1}\fi
\ifx \url \undefined \def \url#1{\textsf{#1}}\fi
\ifx \bchapter \undefined \def \bchapter#1{#1}\fi
\ifx \bbook \undefined \def \bbook#1{#1}\fi
\ifx \bcomment \undefined \def \bcomment#1{#1}\fi
\ifx \oauthor \undefined \def \oauthor#1{#1}\fi
\ifx \citeauthoryear \undefined \def \citeauthoryear#1{#1}\fi
\ifx \endbibitem  \undefined \def \endbibitem {}\fi
\ifx \bconflocation  \undefined \def \bconflocation#1{#1}\fi
\ifx \arxivurl  \undefined \def \arxivurl#1{\textsf{#1}}\fi
\csname PreBibitemsHook\endcsname

\bibitem[\protect\citeauthoryear{Neto et~al.}{2016}]{terry16dvi}
\begin{barticle}
\bauthor{\bsnm{Neto}, \binits{P.A.}},
\bauthor{\bsnm{Friesz}, \binits{T.L.}},
\bauthor{\bsnm{Han}, \binits{K.}}:
\batitle{Electric power network oligopoly as a dynamic {S}tackelberg game}.
\bjtitle{Networks and Spatial Economics}
\bvolume{16}(\bissue{4}),
\bfpage{1211}--\blpage{1241}
(\byear{2016})
\doiurl{10.1007/s11067-016-9337-7}
\end{barticle}
\endbibitem

\bibitem[\protect\citeauthoryear{Mookherjee et~al.}{2008}]{reeto08}
\begin{barticle}
\bauthor{\bsnm{Mookherjee}, \binits{R.}},
\bauthor{\bsnm{Hobbs}, \binits{B.F.}},
\bauthor{\bsnm{Friesz}, \binits{T.L.}},
\bauthor{\bsnm{Rigdon}, \binits{M.A.}}:
\batitle{Dynamic oligopolistic competition on an electric power network with
  ramping costs and joint sales constraints}.
\bjtitle{Journal of Industrial and Management Optimization}
\bvolume{4}(\bissue{3}),
\bfpage{425}--\blpage{452}
(\byear{2008})
\doiurl{10.3934/jimo.2008.4.425}
\end{barticle}
\endbibitem

\bibitem[\protect\citeauthoryear{Fokken et~al.}{2023}]{fokken21}
\begin{barticle}
\bauthor{\bsnm{Fokken}, \binits{E.}},
\bauthor{\bsnm{G{\"o}ttlich}, \binits{S.}},
\bauthor{\bsnm{Herty}, \binits{M.}}:
\batitle{Efficient simulation of coupled gas and power networks under uncertain
  demands}.
\bjtitle{European Journal of Applied Mathematics}
\bvolume{34}(\bissue{3}),
\bfpage{505}--\blpage{531}
(\byear{2023})
\doiurl{10.1017/s0956792522000079}
\end{barticle}
\endbibitem

\bibitem[\protect\citeauthoryear{Schiro et~al.}{2013}]{SchiroPang:2013}
\begin{barticle}
\bauthor{\bsnm{Schiro}, \binits{D.A.}},
\bauthor{\bsnm{Pang}, \binits{J.-S.}},
\bauthor{\bsnm{Shanbhag}, \binits{U.V.}}:
\batitle{On the solution of affine generalized {Nash} equilibrium problems with
  shared constraints by {Lemke's} method}.
\bjtitle{Mathematical Programming}
\bvolume{142}(\bissue{1}),
\bfpage{1}--\blpage{46}
(\byear{2013})
\doiurl{10.1007/s10107-012-0558-3}
\end{barticle}
\endbibitem

\bibitem[\protect\citeauthoryear{Holmberg and
  Philpott}{2018}]{HolmnbergandPhilpott:2018}
\begin{barticle}
\bauthor{\bsnm{Holmberg}, \binits{P.}},
\bauthor{\bsnm{Philpott}, \binits{A.B.}}:
\batitle{On supply-function equilibria in radial transmission networks}.
\bjtitle{European Journal of Operational Research}
\bvolume{271}(\bissue{3}),
\bfpage{985}--\blpage{1000}
(\byear{2018})
\doiurl{10.1016/j.ejor.2018.05.070}
\end{barticle}
\endbibitem

\bibitem[\protect\citeauthoryear{Jun and Vives}{2004}]{JunandVives:2004}
\begin{barticle}
\bauthor{\bsnm{Jun}, \binits{B.}},
\bauthor{\bsnm{Vives}, \binits{X.}}:
\batitle{Strategic incentives in dynamic duopoly}.
\bjtitle{Journal of Economic Theory}
\bvolume{116}(\bissue{2}),
\bfpage{249}--\blpage{281}
(\byear{2004})
\doiurl{10.1016/j.jet.2003.08.005}
\end{barticle}
\endbibitem

\bibitem[\protect\citeauthoryear{Ledvina and
  Sircar}{2011}]{LedvinaandSircar:2011}
\begin{barticle}
\bauthor{\bsnm{Ledvina}, \binits{A.}},
\bauthor{\bsnm{Sircar}, \binits{R.}}:
\batitle{Dynamic {Bertrand} oligopoly}.
\bjtitle{Applied Mathematics {\&} Optimization}
\bvolume{63}(\bissue{1}),
\bfpage{11}--\blpage{44}
(\byear{2011})
\doiurl{10.1007/s00245-010-9110-0}
\end{barticle}
\endbibitem

\bibitem[\protect\citeauthoryear{Lambertini and
  Palestini}{2014}]{LAMBERTINIPalestrini:2014}
\begin{barticle}
\bauthor{\bsnm{Lambertini}, \binits{L.}},
\bauthor{\bsnm{Palestini}, \binits{A.}}:
\batitle{On the feedback solutions of differential oligopoly games with
  hyperbolic demand curve and capacity accumulation}.
\bjtitle{European Journal of Operational Research}
\bvolume{236}(\bissue{1}),
\bfpage{272}--\blpage{281}
(\byear{2014})
\doiurl{10.1016/j.ejor.2013.12.008}
\end{barticle}
\endbibitem

\bibitem[\protect\citeauthoryear{Xin and Sun}{2018}]{XinandSun:2018}
\begin{barticle}
\bauthor{\bsnm{Xin}, \binits{B.}},
\bauthor{\bsnm{Sun}, \binits{M.}}:
\batitle{A differential oligopoly game for optimal production planning and
  water savings}.
\bjtitle{European Journal of Operational Research}
\bvolume{269}(\bissue{1}),
\bfpage{206}--\blpage{217}
(\byear{2018})
\doiurl{10.1016/j.ejor.2017.07.016}
\end{barticle}
\endbibitem

\bibitem[\protect\citeauthoryear{Colombo and
  Labrecciosa}{2019}]{COLOMBOLabrecciosa:2019}
\begin{botherref}
\oauthor{\bsnm{Colombo}, \binits{L.}},
\oauthor{\bsnm{Labrecciosa}, \binits{P.}}:
Stackelberg versus cournot: A differential game approach
\textbf{101},
239--261
(2019)
\doiurl{10.1016/j.jedc.2018.10.007}
\end{botherref}
\endbibitem

\bibitem[\protect\citeauthoryear{Hinterm\"{u}ller and
  Surowiec}{2013}]{hintermueller2013pde}
\begin{barticle}
\bauthor{\bsnm{Hinterm\"{u}ller}, \binits{M.}},
\bauthor{\bsnm{Surowiec}, \binits{T.}}:
\batitle{A {PDE}-constrained generalized {N}ash equilibrium problem with
  pointwise control and state constraints}.
\bjtitle{Pacific Journal on Optimization}
\bvolume{9}(\bissue{2}),
\bfpage{251}--\blpage{273}
(\byear{2013})
\end{barticle}
\endbibitem

\bibitem[\protect\citeauthoryear{Hintermüller
  et~al.}{2015}]{Hintermueller2015}
\begin{botherref}
\oauthor{\bsnm{Hintermüller}, \binits{M.}},
\oauthor{\bsnm{Surowiec}, \binits{T.}},
\oauthor{\bsnm{Kämmler}, \binits{A.}}:
Generalized {Nash} equilibrium problems in {Banach} spaces: Theory,
  {Nikaido--Isoda}-based path-following methods, and applications
\textbf{25}(3),
1826--1856
(2015)
\doiurl{10.1137/14096829X}
\end{botherref}
\endbibitem

\bibitem[\protect\citeauthoryear{Dreves and Gwinner}{2016}]{DrevesGwinner:2016}
\begin{barticle}
\bauthor{\bsnm{Dreves}, \binits{A.}},
\bauthor{\bsnm{Gwinner}, \binits{J.}}:
\batitle{Jointly convex generalized {Nash} equilibria and elliptic
  multiobjective optimal control}.
\bjtitle{Journal of Optimization Theory and Applications}
\bvolume{168}(\bissue{3}),
\bfpage{1065}--\blpage{1086}
(\byear{2016})
\doiurl{10.1007/s10957-015-0788-7}
\end{barticle}
\endbibitem

\bibitem[\protect\citeauthoryear{Gugat and Steffensen}{2018}]{Gugat2018string}
\begin{barticle}
\bauthor{\bsnm{Gugat}, \binits{M.}},
\bauthor{\bsnm{Steffensen}, \binits{S.}}:
\batitle{Dynamic boundary control games with networks of strings}.
\bjtitle{{ESAIM}: Control, Optimisation and Calculus of Variations}
\bvolume{24}(\bissue{4}),
\bfpage{1789}--\blpage{1813}
(\byear{2018})
\doiurl{10.1051/cocv/2017082}
\end{barticle}
\endbibitem

\bibitem[\protect\citeauthoryear{Kanzow et~al.}{2019}]{kanzow2019multiplier}
\begin{barticle}
\bauthor{\bsnm{Kanzow}, \binits{C.}},
\bauthor{\bsnm{Karl}, \binits{V.}},
\bauthor{\bsnm{Steck}, \binits{D.}},
\bauthor{\bsnm{Wachsmuth}, \binits{D.}}:
\batitle{The multiplier-penalty method for generalized {Nash} equilibrium
  problems in {Banach} spaces}.
\bjtitle{SIAM Journal on Optimization}
\bvolume{29}(\bissue{1}),
\bfpage{767}--\blpage{793}
(\byear{2019})
\doiurl{10.1137/17M114114X}
\end{barticle}
\endbibitem

\bibitem[\protect\citeauthoryear{Glenk and Reichelstein}{2022}]{Glenk2022}
\begin{barticle}
\bauthor{\bsnm{Glenk}, \binits{G.}},
\bauthor{\bsnm{Reichelstein}, \binits{S.}}:
\batitle{Reversible power-to-gas systems for energy conversion and storage}.
\bjtitle{Nature communications}
\bvolume{13}(\bissue{1}),
\bfpage{1}--\blpage{10}
(\byear{2022})
\doiurl{10.1038/s41467-022-29520-0}
\end{barticle}
\endbibitem

\bibitem[\protect\citeauthoryear{Adams and Fournier}{2003}]{Adams2003}
\begin{bbook}
\bauthor{\bsnm{Adams}, \binits{R.A.}},
\bauthor{\bsnm{Fournier}, \binits{J.J.F.}}:
\bbtitle{Sobolev Spaces},
\bedition{2}nd edn.
\bsertitle{Pure and {Applied} {Mathematics} ({Amsterdam})},
vol. \bseriesno{140}.
\bpublisher{Elsevier/Academic Press, Amsterdam},
(\byear{2003})
\end{bbook}
\endbibitem

\bibitem[\protect\citeauthoryear{Whinston
  et~al.}{1995}]{whinston1995microeconomic}
\begin{barticle}
\bauthor{\bsnm{Whinston}, \binits{M.D.}},
\bauthor{\bsnm{Green}, \binits{J.R.}},
\bauthor{\bsnm{Mas-Cole}, \binits{A.}}:
\batitle{Microeconomic theory}.
\bjtitle{Oxford University Press. Caps}
\bvolume{3}(\bissue{5}),
\bfpage{15}
(\byear{1995})
\doiurl{10.52292/j.estudecon.1996.463}
\end{barticle}
\endbibitem

\bibitem[\protect\citeauthoryear{Chyong and Hobbs}{2014}]{chyong2014strategic}
\begin{barticle}
\bauthor{\bsnm{Chyong}, \binits{C.K.}},
\bauthor{\bsnm{Hobbs}, \binits{B.F.}}:
\batitle{Strategic {Eurasian} natural gas market model for energy security and
  policy analysis: Formulation and application to {South Stream}}.
\bjtitle{Energy Economics}
\bvolume{44},
\bfpage{198}--\blpage{211}
(\byear{2014})
\doiurl{10.1016/j.eneco.2014.04.006}
\end{barticle}
\endbibitem

\bibitem[\protect\citeauthoryear{Egging}{2013}]{egging2013benders}
\begin{barticle}
\bauthor{\bsnm{Egging}, \binits{R.}}:
\batitle{Benders decomposition for multi-stage stochastic mixed complementarity
  problems--applied to a global natural gas market model}.
\bjtitle{European Journal of Operational Research}
\bvolume{226}(\bissue{2}),
\bfpage{341}--\blpage{353}
(\byear{2013})
\doiurl{10.1016/j.ejor.2012.11.024}
\end{barticle}
\endbibitem

\bibitem[\protect\citeauthoryear{Yang et~al.}{2016}]{yang2016exploration}
\begin{barticle}
\bauthor{\bsnm{Yang}, \binits{Z.}},
\bauthor{\bsnm{Zhang}, \binits{R.}},
\bauthor{\bsnm{Zhang}, \binits{Z.}}:
\batitle{An exploration of a strategic competition model for the {European
  Union} natural gas market}.
\bjtitle{Energy Economics}
\bvolume{57},
\bfpage{236}--\blpage{242}
(\byear{2016})
\doiurl{10.1016/j.eneco.2016.05.008}
\end{barticle}
\endbibitem

\bibitem[\protect\citeauthoryear{Kost et~al.}{2024}]{kostFraunhofer2024}
\begin{botherref}
\oauthor{\bsnm{Kost}, \binits{C.}},
\oauthor{\bsnm{Mueller}, \binits{P.}},
\oauthor{\bsnm{Sepulveda-Schweiger}, \binits{J.}},
\oauthor{\bsnm{Fluri}, \binits{V.}},
\oauthor{\bsnm{Thomsen}, \binits{J.}}:
Levelized Cost of Renewable Energies.
\url{https://www.ise.fraunhofer.de/content/dam/ise/de/documents/publications/studies/DE2024_ISE_Studie_Stromgestehungskosten_Erneuerbare_Energien.pdf}
(2024)
\end{botherref}
\endbibitem

\bibitem[\protect\citeauthoryear{Ban et~al.}{2017}]{shahid17}
\begin{barticle}
\bauthor{\bsnm{Ban}, \binits{M.}},
\bauthor{\bsnm{Yu}, \binits{J.}},
\bauthor{\bsnm{Shahidehpour}, \binits{M.}},
\bauthor{\bsnm{Yao}, \binits{Y.}}:
\batitle{Integration of power-to-hydrogen in day-ahead security-constrained
  unit commitment with high wind penetration}.
\bjtitle{Journal of Modern Power Systems and Clean Energy}
\bvolume{5}(\bissue{3}),
\bfpage{337}--\blpage{349}
(\byear{2017})
\doiurl{10.1007/s40565-017-0277-0}
\end{barticle}
\endbibitem

\bibitem[\protect\citeauthoryear{Vives}{1999}]{vives1999oligopoly}
\begin{bbook}
\bauthor{\bsnm{Vives}, \binits{X.}}:
\bbtitle{Oligopoly Pricing: Old Ideas and New Tools}.
\bpublisher{The MIT Press},
(\byear{1999})
\end{bbook}
\endbibitem

\bibitem[\protect\citeauthoryear{Liu and Gross}{2004}]{liu04ptdf}
\begin{barticle}
\bauthor{\bsnm{Liu}, \binits{M.}},
\bauthor{\bsnm{Gross}, \binits{G.}}:
\batitle{Role of distribution factors in congestion revenue rights
  applications}.
\bjtitle{IEEE Transactions on Power Systems}
\bvolume{19}(\bissue{2}),
\bfpage{802}--\blpage{810}
(\byear{2004})
\doiurl{10.1109/TPWRS.2004.826708}
\end{barticle}
\endbibitem

\bibitem[\protect\citeauthoryear{Wood and Wollenberg}{1996}]{Wood1996PowerGO}
\begin{barticle}
\bauthor{\bsnm{Wood}, \binits{A.J.}},
\bauthor{\bsnm{Wollenberg}, \binits{B.F.}}:
\batitle{Power generation operation and control — 2nd edition}.
\bjtitle{Fuel and Energy Abstracts}
\bvolume{37},
\bfpage{104}--\blpage{108}
(\byear{1996})
\doiurl{10.1016/0140-6701(96)88715-7}
\end{barticle}
\endbibitem

\bibitem[\protect\citeauthoryear{Papavasiliou}{2021}]{notes-anthony21}
\begin{botherref}
\oauthor{\bsnm{Papavasiliou}, \binits{A.}}:
The DC Optimal Power Flow Quantitative Energy Economics.
\url{https://ap-rg.eu/wp-content/uploads/2016/04/OPF.pdf}
(2021)
\end{botherref}
\endbibitem

\bibitem[\protect\citeauthoryear{Papavasiliou
  et~al.}{2017}]{anthony17stochastic}
\begin{barticle}
\bauthor{\bsnm{Papavasiliou}, \binits{A.}},
\bauthor{\bsnm{Mou}, \binits{Y.}},
\bauthor{\bsnm{Cambier}, \binits{L.}},
\bauthor{\bsnm{Scieur}, \binits{D.}}:
\batitle{Application of stochastic dual dynamic programming to the real-time
  dispatch of storage under renewable supply uncertainty}.
\bjtitle{IEEE Transactions on Sustainable Energy}
\bvolume{PP},
\bfpage{1}--\blpage{1}
(\byear{2017})
\doiurl{10.1109/TSTE.2017.2748463}
\end{barticle}
\endbibitem

\bibitem[\protect\citeauthoryear{Kannan et~al.}{2013}]{kannanoms13}
\begin{barticle}
\bauthor{\bsnm{Kannan}, \binits{A.}},
\bauthor{\bsnm{Shanbhag}, \binits{U.}},
\bauthor{\bsnm{Kim}, \binits{H.}}:
\batitle{Addressing supply-side risk in uncertain power markets: Stochastic
  {Nash} models, scalable algorithms and error analysis}.
\bjtitle{Optimization Methods Software}
\bvolume{28}(\bissue{5}),
\bfpage{1095}--\blpage{1138}
(\byear{2013})
\doiurl{10.1080/10556788.2012.676756}
\end{barticle}
\endbibitem

\bibitem[\protect\citeauthoryear{Grimm et~al.}{2016}]{GRIMM2016493}
\begin{barticle}
\bauthor{\bsnm{Grimm}, \binits{V.}},
\bauthor{\bsnm{Martin}, \binits{A.}},
\bauthor{\bsnm{Schmidt}, \binits{M.}},
\bauthor{\bsnm{Weibelzahl}, \binits{M.}},
\bauthor{\bsnm{Zöttl}, \binits{G.}}:
\batitle{Transmission and generation investment in electricity markets: The
  effects of market splitting and network fee regimes}.
\bjtitle{European Journal of Operational Research}
\bvolume{254}(\bissue{2}),
\bfpage{493}--\blpage{509}
(\byear{2016})
\doiurl{10.1016/j.ejor.2016.03.044}
\end{barticle}
\endbibitem

\bibitem[\protect\citeauthoryear{Yao et~al.}{2008}]{adler-twosettlement}
\begin{barticle}
\bauthor{\bsnm{Yao}, \binits{J.}},
\bauthor{\bsnm{Adler}, \binits{I.}},
\bauthor{\bsnm{Oren}, \binits{S.S.}}:
\batitle{Modeling and computing two-settlement oligopolistic equilibrium in a
  congested electricity network}.
\bjtitle{Operations Research}
\bvolume{56}(\bissue{1}),
\bfpage{34}--\blpage{47}
(\byear{2008})
\doiurl{10.1287/opre.1070.0416}
\end{barticle}
\endbibitem

\bibitem[\protect\citeauthoryear{Gugat and Sokolowski}{2022}]{MR4510217}
\begin{barticle}
\bauthor{\bsnm{Gugat}, \binits{M.}},
\bauthor{\bsnm{Sokolowski}, \binits{J.}}:
\batitle{On problems of dynamic optimal nodal control for gas networks}.
\bjtitle{Pure Appl. Funct. Anal.}
\bvolume{7}(\bissue{5}),
\bfpage{1699}--\blpage{1715}
(\byear{2022})
\end{barticle}
\endbibitem

\bibitem[\protect\citeauthoryear{Clees et~al.}{2018}]{compressor}
\begin{barticle}
\bauthor{\bsnm{Clees}, \binits{T.}},
\bauthor{\bsnm{Nikitin}, \binits{I.}},
\bauthor{\bsnm{Nikitina}, \binits{L.}},
\bauthor{\bsnm{Segiet}, \binits{L.}}:
\batitle{Modeling of gas compressors and hierarchical reduction for globally
  convergent stationary network solvers}.
\bjtitle{International Journal on Advances in Systems and Measurements}
\bvolume{11}(\bissue{1-2}),
\bfpage{61}--\blpage{71}
(\byear{2018})
\end{barticle}
\endbibitem

\bibitem[\protect\citeauthoryear{Gugat et~al.}{2018}]{MR3780468}
\begin{barticle}
\bauthor{\bsnm{Gugat}, \binits{M.}},
\bauthor{\bsnm{Leugering}, \binits{G.}},
\bauthor{\bsnm{Martin}, \binits{A.}},
\bauthor{\bsnm{Schmidt}, \binits{M.}},
\bauthor{\bsnm{Sirvent}, \binits{M.}},
\bauthor{\bsnm{Wintergerst}, \binits{D.}}:
\batitle{M{IP}-based instantaneous control of mixed-integer {PDE}-constrained
  gas transport problems}.
\bjtitle{Comput. Optim. Appl.}
\bvolume{70}(\bissue{1}),
\bfpage{267}--\blpage{294}
(\byear{2018})
\doiurl{10.1007/s10589-017-9970-1}
\end{barticle}
\endbibitem

\bibitem[\protect\citeauthoryear{Hahn et~al.}{2017}]{hahn2017MixedIntegerPO}
\begin{botherref}
\oauthor{\bsnm{Hahn}, \binits{M.}},
\oauthor{\bsnm{Leyffer}, \binits{S.}},
\oauthor{\bsnm{Zavala}, \binits{V.M.}}:
Mixed-integer {PDE}-constrained optimal control of gas networks.
Mathematics and Computer Science,
113
(2017)
\end{botherref}
\endbibitem

\bibitem[\protect\citeauthoryear{Zheng
  et~al.}{2021}]{doi:10.1021/acs.iecr.0c04007}
\begin{barticle}
\bauthor{\bsnm{Zheng}, \binits{T.}},
\bauthor{\bsnm{Feng}, \binits{H.}},
\bauthor{\bsnm{Wang}, \binits{B.}},
\bauthor{\bsnm{Zheng}, \binits{J.}},
\bauthor{\bsnm{Liang}, \binits{Y.}},
\bauthor{\bsnm{Ma}, \binits{Y.}},
\bauthor{\bsnm{Keene}, \binits{T.}},
\bauthor{\bsnm{Li}, \binits{J.}}:
\batitle{Operational optimization of a cyclic gas pipeline network with
  consideration of thermal hydraulics}.
\bjtitle{Industrial \& Engineering Chemistry Research}
\bvolume{60}(\bissue{6}),
\bfpage{2501}--\blpage{2522}
(\byear{2021})
\doiurl{10.1021/acs.iecr.0c04007}
{\href{https://arxiv.org/abs/https://doi.org/10.1021/acs.iecr.0c04007}{{https://doi.org/10.1021/acs.iecr.0c04007}}}
\end{barticle}
\endbibitem

\bibitem[\protect\citeauthoryear{Schmidt et~al.}{2017}]{data2040040}
\begin{botherref}
\oauthor{\bsnm{Schmidt}, \binits{M.}},
\oauthor{\bsnm{A{\ss}mann}, \binits{D.}},
\oauthor{\bsnm{Burlacu}, \binits{R.}},
\oauthor{\bsnm{Humpola}, \binits{J.}},
\oauthor{\bsnm{Joormann}, \binits{I.}},
\oauthor{\bsnm{Kanelakis}, \binits{N.}},
\oauthor{\bsnm{Koch}, \binits{T.}},
\oauthor{\bsnm{Oucherif}, \binits{D.}},
\oauthor{\bsnm{Pfetsch}, \binits{M.E.}},
\oauthor{\bsnm{Schewe}, \binits{L.}},
\oauthor{\bsnm{Schwarz}, \binits{R.}},
\oauthor{\bsnm{Sirvent}, \binits{M.}}:
Gaslib: Library of gas network instances.
Data
\textbf{2}(4)
(2017)
\doiurl{10.3390/data2040040}
\end{botherref}
\endbibitem

\bibitem[\protect\citeauthoryear{Domschke et~al.}{2017}]{Domschke2017}
\begin{botherref}
\oauthor{\bsnm{Domschke}, \binits{P.}},
\oauthor{\bsnm{Hiller}, \binits{B.}},
\oauthor{\bsnm{Lang}, \binits{J.}},
\oauthor{\bsnm{Tischendorf}, \binits{C.}}:
Modellierung von {G}asnetzwerken: Eine {\"u}bersicht
(2017)
\end{botherref}
\endbibitem

\bibitem[\protect\citeauthoryear{Grimm et~al.}{2021}]{Grimm2021}
\begin{botherref}
\oauthor{\bsnm{Grimm}, \binits{V.}},
\oauthor{\bsnm{Hinterm\"{u}ller}, \binits{M.}},
\oauthor{\bsnm{Huber}, \binits{O.}},
\oauthor{\bsnm{Schewe}, \binits{L.}},
\oauthor{\bsnm{Schmidt}, \binits{M.}},
\oauthor{\bsnm{Z\"{o}ttl}, \binits{G.}}:
A {PDE}-constrained generalized {N}ash equilibrium approach for modeling gas
  markets with transport.
Preprint,
TRR 154
(2021)
\end{botherref}
\endbibitem

\bibitem[\protect\citeauthoryear{Egger and Philippi}{2021}]{Egger2021}
\begin{barticle}
\bauthor{\bsnm{Egger}, \binits{H.}},
\bauthor{\bsnm{Philippi}, \binits{N.}}:
\batitle{On the transport limit of singularly perturbed convection–diffusion
  problems on networks}.
\bjtitle{Mathematical Methods in the Applied Sciences}
\bvolume{44}(\bissue{6}),
\bfpage{5005}--\blpage{5020}
(\byear{2021})
\doiurl{10.1002/mma.7084}
\end{barticle}
\endbibitem

\bibitem[\protect\citeauthoryear{Appleby and Foulkes}{1988}]{osti_book88}
\begin{botherref}
\oauthor{\bsnm{Appleby}, \binits{A.J.}},
\oauthor{\bsnm{Foulkes}, \binits{F.R.}}:
Fuel cell handbook.
\url{https://www.osti.gov/biblio/5616450}
(1988)
\end{botherref}
\endbibitem

\bibitem[\protect\citeauthoryear{Blomen and Mugerwa}{2013}]{blomen2013fuel}
\begin{bbook}
\bauthor{\bsnm{Blomen}, \binits{L.J.}},
\bauthor{\bsnm{Mugerwa}, \binits{M.N.}}:
\bbtitle{Fuel Cell Systems}.
\bpublisher{Springer},
(\byear{2013}).
\doiurl{10.1007/978-1-4899-2424-7}
\end{bbook}
\endbibitem

\bibitem[\protect\citeauthoryear{Lions}{1971}]{Lions1971}
\begin{bbook}
\bauthor{\bsnm{Lions}, \binits{J.L.}}:
\bbtitle{Optimal Control of Systems Governed by Partial Differential
  Equations}.
\bpublisher{Springer},
(\byear{1971}).
\doiurl{10.1007/978-3-642-65024-6}
\end{bbook}
\endbibitem

\bibitem[\protect\citeauthoryear{Kunisch and
  Vexler}{2007}]{kunisch_constrained_2007}
\begin{barticle}
\bauthor{\bsnm{Kunisch}, \binits{K.}},
\bauthor{\bsnm{Vexler}, \binits{B.}}:
\batitle{Constrained {Dirichlet} boundary control in {L}{\textasciicircum}2 for
  a class of evolution equations}.
\bjtitle{SIAM Journal on Control and Optimization}
\bvolume{46}(\bissue{5}),
\bfpage{1726}--\blpage{1753}
(\byear{2007})
\doiurl{10.1137/060670110}
\end{barticle}
\endbibitem

\bibitem[\protect\citeauthoryear{Hinze et~al.}{2009}]{Hinze2009}
\begin{bbook}
\bauthor{\bsnm{Hinze}, \binits{M.}},
\bauthor{\bsnm{Pinnau}, \binits{R.}},
\bauthor{\bsnm{Ulbrich}, \binits{M.}},
\bauthor{\bsnm{Ulbrich}, \binits{S.}}:
\bbtitle{Optimization with {PDE} {Constraints}}.
\bpublisher{Springer},
(\byear{2009}).
\doiurl{10.1007/978-1-4020-8839-1}
\end{bbook}
\endbibitem

\bibitem[\protect\citeauthoryear{Wloka}{1987}]{Wloka1987}
\begin{bbook}
\bauthor{\bsnm{Wloka}, \binits{J.}}:
\bbtitle{Partial Differential Equations}.
\bpublisher{Cambridge University Press, Cambridge},
(\byear{1987}).
\doiurl{10.1017/CBO9781139171755} .
\burl{https://doi.org/10.1017/CBO9781139171755}
\end{bbook}
\endbibitem

\bibitem[\protect\citeauthoryear{Bauschke et~al.}{2011}]{Bauschke2011}
\begin{bbook}
\bauthor{\bsnm{Bauschke}, \binits{H.H.}},
\bauthor{\bsnm{Combettes}, \binits{P.L.}}, \betal:
\bbtitle{Convex {Analysis} and {Monotone} {Operator} {Theory} in {Hilbert}
  {Spaces}}
vol. \bseriesno{408}.
\bpublisher{Springer},
(\byear{2011}).
\doiurl{10.1007/978-1-4419-9467-7}
\end{bbook}
\endbibitem

\bibitem[\protect\citeauthoryear{Beznosikov
  et~al.}{2022}]{beznosikov2022decentralized}
\begin{bchapter}
\bauthor{\bsnm{Beznosikov}, \binits{A.}},
\bauthor{\bsnm{Dvurechenskii}, \binits{P.}},
\bauthor{\bsnm{Koloskova}, \binits{A.}},
\bauthor{\bsnm{Samokhin}, \binits{V.}},
\bauthor{\bsnm{Stich}, \binits{S.U.}},
\bauthor{\bsnm{Gasnikov}, \binits{A.}}:
\bctitle{Decentralized local stochastic extra-gradient for variational
  inequalities}.
In: \beditor{\bsnm{Koyejo}, \binits{S.}},
\beditor{\bsnm{Mohamed}, \binits{S.}},
\beditor{\bsnm{Agarwal}, \binits{A.}},
\beditor{\bsnm{Belgrave}, \binits{D.}},
\beditor{\bsnm{Cho}, \binits{K.}},
\beditor{\bsnm{Oh}, \binits{A.}} (eds.)
\bbtitle{Advances in Neural Information Processing Systems},
vol. \bseriesno{35},
pp. \bfpage{38116}--\blpage{38133}.
\bpublisher{Curran Associates, Inc.},
(\byear{2022}).
\burl{https://proceedings.neurips.cc/paper_files/paper/2022/file/f9379afacdbabfdc6b060972b60f9ab8-Paper-Conference.pdf}
\end{bchapter}
\endbibitem

\bibitem[\protect\citeauthoryear{Tran-Dinh and
  Nguyen-Trung}{2024}]{trandinh2024revisiting}
\begin{botherref}
\oauthor{\bsnm{Tran-Dinh}, \binits{Q.}},
\oauthor{\bsnm{Nguyen-Trung}, \binits{N.}}:
Revisiting Extragradient-Type Methods -- Part 1: Generalizations and Sublinear
  Convergence Rates
(2024).
\url{https://arxiv.org/abs/2409.16859}
\end{botherref}
\endbibitem

\end{thebibliography}

\end{document}